 \newcommand{\0}{\bf{0}}
 \newcommand{\bea}{\begin{eqnarray}}
\newtheorem{thm}{Theorem}[section]
 \newtheorem{cor}[thm]{Corollary}
 \newtheorem{lem}[thm]{Lemma}
\newtheorem{ex}[thm]{Example}
 \newtheorem{defn}[thm]{Definition}
\newtheorem{discu}{Discussion:}
\newtheorem{conje}{Conjecture:}
\begin{document}
\begin{center}
\vspace{1cm}
 {\bf \large  Comparison results for  Proper Double Splittings of Rectangular Matrices}
 \vskip 1cm
  {\bf K. Appi Reddy, T. Kurmayya}\\
                        Department of Mathematics\\
                        National Institute of Technology Warangal\\
                                Warangal - 506 004, India. \\
\end{center}

\begin{abstract}
    In this article, we consider two proper double splittings satisfying certain conditions, of a semi-monotone rectangular matrix $A$ and derive new comparison results for the spectral radii of the corresponding iteration matrices. These comparison results are useful to analyse the rate of convergence of the iterative methods (formulated from the double splittings) for solving rectangular linear system $Ax=b.$
\end{abstract}

\vspace{1cm} {\bf Keywords:}\, \,  Double splittings; semi-monotone matrix; spectral radius; Moore-Penrose inverse; Group inverse. \, \,

\vspace{.2cm} {\bf AMS Subject Classification:}\,\,\,\, 15A09, 65F15.\\
\thispagestyle{empty}

\thispagestyle{empty}

\newpage
\section{Introduction}

 Consider the following   linear system,
\begin{equation}
\label{I1.1} Ax=b,
\end{equation}
where $A\in \mathbb{R}^{n\times n}$ is a nonsingular  matrix, $b\in \mathbb{R}^{n\times 1}$ is a given vector and
$x\in \mathbb{R}^{n\times 1}$ is an unknown vector.
In order to solve (\ref{I1.1}), iterative methods of the form
\begin{align}
\label{I1.2}x^{i+1}=Hx^{i}+c,  ~~~~~~~~i=1,2,3...
\end{align}
are often employed. The iterative formula (\ref{I1.2}) is
obtained by splitting $A$ into the form $A=U-V,$ where $U$ is nonsingular and then  setting
$H=U^{-1}V$ and $c=U^{-1}b$. Such a splitting is called a single splitting (see, \cite{wn}) of $A$  and the matrix $H$ is called an iteration matrix \cite{vg}. It is well known (see chapter 7, \cite{bp4}) that the iterative method (\ref{I1.2}) converges to the unique solution of (\ref{I1.1}) (irrespective of the choice of initial vector $x^{\circ}$) if and only if  $\rho(H)<1$, where $\rho(H)$ denotes the spectral radius of $H$, viz., the maximum of the moduli of the eigenvalues of $H$. Note that standard iterative methods like the Jacobi, Gauass-Seidel and successive over-relaxation methods  arise from different choices of real square matrices  $U$ and $V$. A decomposition $A=U-V$ of $A\in \mathbb{R}^{n\times n}$ is called a  regular splitting if $U^{-1}$ exists, $U^{-1} \geq 0$ and $V \geq 0$, where the matrix $B \geq 0$ means all the entries of $B$ are nonnegative. The notion of regular splitting was proposed by Varga \cite{vg} and it was shown that $\rho(H)<1$ if and only if $A$ is monotone. Here,  matrix $A$  monotone \cite{cz} means $A^{-1}$  exists and $A^{-1}\geq 0.$ A decomposition $A=U-V$ of $A\in \mathbb{R}^{n\times n}$  is called a weak regular splitting if  $U^{-1} \geq 0$ and $U^{-1}V \geq 0.$ This was proposed by Ortega and Rheinboldt \cite{or} and again it was shown that $\rho(H)<1$ if and only if $A$ is monotone.
These results show the importance of monotone matrices and  the spectral radius $\rho(H)$ of an iteration matrix,  in the study of convergence of the iterative methods of form (\ref{I1.2}). It is well known that  the convergence of the iterative method (\ref{I1.2}) is faster whenever $\rho(H)$  is smaller and $\rho(H)< 1.$ This leads to the problem of comparison between the spectral radii of the iteration matrices of corresponding iterative methods which are derived from two different splittings $A=U_{1}-V_{1}$ and $A=U_{2}-V_{2}$ of the same matrix $A.$  Results related to this problem are called comparison results for  splittings of matrices. So far, various  comparison theorems for different kinds of single splittings of  matrices have been derived by several authors. For details of these results one could refer to (\cite{besz} to \cite{bp4},  \cite{le}, \cite{ys2},  \cite{vg}, \cite{wn1}  and  \cite{wn2}).

Berman and Plemmons \cite{bp2} then extended the notion of splitting to rectangular matrices and called it as a proper splitting. A decomposition $A=U-V$ of $A\in \mathbb{R}^{m\times n}$ is called a  proper splitting if  $\mathcal{R}(A)=\mathcal{R}(U)$ and $\mathcal{N}(A)=\mathcal{N}(U)$, where
$\mathcal{R}(A)$ and $\mathcal{N}(A)$ denote the range space of $A$ and the null space of $A$, respectively.  Analogous to the invertible case, with such a splitting one associates an iterative sequence $x^{i+1}=Hx^{i}+c,$ where (this time) $H=U^{\dagger}V$ (again) called iteration matrix, $c=U^{\dagger}b$ and $U^{\dagger}$ denotes the Moore-Penreose inverse of $U$ (see next section for definition). The initial vector $x^{\circ},$ however cannot be chosen arbitrarily; it must not belong to   $\mathcal{N}(V).$ Once again it is well known that this sequence converges to $A^{\dagger}b,$ the least square solution of minimum norm, of the system $Ax=b$ (irrespective of the initial vector $x^{\circ}$) if and only if $\rho(H)< 1.$ For details, refer to \cite{bp4}.

Recently, Jena et al. \cite{ljdmsp} extended the notion of regular and weak regular splittings to rectangular matrices and the respective definitions are given next. A decomposition  $A=U-V$ of $A\in \mathbb{R}^{m\times n}$ is called a  proper regular splitting if it is proper splitting such that
$U^{\dagger} \geq 0$ and $V \geq 0.$   It  is called proper weak regular splitting if it is proper splitting such that $U^{\dagger} \geq 0$ and $U^{\dagger}V \geq 0.$ Note that Berman and Plemmons \cite{bp2} proved a convergence theorem for these splittings without specifying the types of matrix decomposition. A matrix $A\in \mathbb{R}^{m\times n}$ is called semi-monotone if $A^{\dagger}\geq 0.$ The authors of \cite{ljdmsp} have considered proper regular splitting of semi monotone matrix $A$ and obtained some comparison results.

 Now, we turn our focus on to  the comparison results for  double splittings that are available in the literature. A decomposition   $A=P-R+S,$ where $P$ is nonsingular, is called a double splitting of $A\in \mathbb{R}^{n\times n}.$ This notion was introduced by $Wo\acute{z}nicki$ \cite{wn}. With such a  splitting,  the following iterative scheme was formulated for  solving (\ref{I1.1}):
\begin{align}
\label{I1.3} x ^{i+1} = P^{-1}Rx^{i}- P^{-1}Sx^{i-1} + P^{-l}b, ~~~~~~~~~~~~i = 1 , 2 , 3. . .
 \end{align}
 Following the idea of Golub and Varga {\cite{gb} }, $Wo\acute{z}nicki$ wrote equation (\ref{I1.3}) in the following equivalent
form: \[
 \begin{pmatrix} x^{i+1} \\
                      x^{i}

                      \end{pmatrix}
                      =\begin{pmatrix} P^{-1}R & -P^{-1}S\\
                     I & 0
                      \end{pmatrix}
                      \begin{pmatrix} x^{i}\\
                      x^{i-1}
                      \end{pmatrix}
                      +\begin{pmatrix} P^{-1}b\\
                      0
                      \end{pmatrix},
                      \]
                      where $I$ is the identity matrix. Then, it was shown that the iterative method (\ref{I1.3}) converges to the unique solution of (\ref{I1.1}) for all initial
vectors $x^{0}$ , $x^{1}$  if and only if the spectral radius of the iteration matrix
\begin{align*}
 W&=\begin{pmatrix} P^{-1}R & -P^{-1}S\\
                     I & 0
                       \end{pmatrix}
\end{align*}
is less than one, that is  $p(W) < 1$.

Based on this idea, in recent years, several comparison theorems have been proved for double splittings of matrices. We briefly review few of them here. First, let us recall the definitions of regular and weak regular double splittings.
A decomposition $A=P-R+S$ is called  regular double splitting if  $P^{-1}\geq 0$, $R\geq 0$ and $-S\geq 0$; it is called
 weak  regular double splitting if  $P^{-1}\geq 0$, $P^{-1}R\geq 0$ and $-P^{-1}S\geq 0$. Shen and Huang \cite{ss} have considered  regular and weak regular double splittings of a monotone matrix or Hermitian positive definite matrix and obtained some comparison theorems. Miao and Zheng \cite{mizh} have obtained comparison theorem for the spectral radii of matrices arising from double splitting of different monotone matrices. Song and Song \cite{sjsy} have studied convergence and comparison theorems for nonnegative double splittings of a real square nonsingular matrices. Li and Wu \cite{li} have obtained some comparison theorems for double splittings of a matrix. Jena et al. \cite{ljdmsp} and  Mishra \cite{deb} have introduced the notions of double proper regular splittings and double proper weak regular  splittings and derived some comparison theorems. Recently,
 Alekha kumar and Mushra \cite{bm} have considered proper nonnegative double splittings of nonnegative matrix and derived certain comparison theorems.

In this article we generalize the comparison results of  Shen and Huang \cite{ss} from square nonsingular matrices to rectangular matrices and from classical inverses to Moore-Penrose inverses. Infact, we consider two double splittings  $A=P_{1}-R_{1}+S_{1}$ and $A=P_{2}-R_{2}+S_{2}$ of a semi-monotone matrix  $A\in \mathbb{R}^{m\times n}$ and derive two comparison theorems for the spectral radii of the corresponding iteration matrices. In section 2, we introduce notations and preliminary results. We present main results in section 3.

\section{Notations,  Definitions and Preliminaries}
 In this section, we fix notations and collect basic definitions and preliminary results which will be used in the sequel. Let $\mathbb{R}^{m\times n}$ denote the set of all  real matrices with $m$ rows and $n$ columns. For $A\in \mathbb{R}^{m\times n},$ the transpose of $A$ is denoted by $A^{t}$; and
 the matrix $X\in \mathbb{R}^{n\times m}$ satisfying $AXA=A$, $XAX=X$, $(AX)^{t}=AX$ and $(XA)^{t}=XA$ is called the Moore-Penrose inverse of $A.$ It always exists and unique, and is denoted by $A^{\dagger}$. If $A$ is invertible then $A^{\dagger}=A^{-1}$.
Let  $L$ and $M$ be complementary subspaces of a real Euclidean space $\mathbb{R}^{n}$. Then the projection of $\mathbb{R}^{n}$ on $L$ along $M$
is denoted by $P_{L,M}$. If, in addition, $L$ and $M$ are orthogonal then it is called an orthogonal projection and it is  denoted simply by $P_{L}$.  The following well known properties (see, \cite{bg}) of
$A^{\dagger}$,    will be  used in this manuscript: $\mathcal{R}(A^{t})=\mathcal{R}(A^{\dagger})$, $\mathcal{N}(A^{t})=\mathcal{N}(A^{\dagger})$, $AA^{\dagger}=P_{\mathcal{R}(A)}$, $A^{\dagger}A=P_{\mathcal{R}(A^{t})}$.  In particular, if $x\in \mathcal{R}(A^{t})$ then $x=A^{\dagger}Ax$.

A matrix $A\in \mathbb{R}^{m\times n}$ is nonnegative, if all the entries of $A$ are nonnegative, this is denoted $A\geq 0.$ The same notation
and nomenclature are also used for vectors. For $A, B\in \mathbb{R}^{m\times n}$, we write $B\geq A$ if $B-A\geq 0.$

We now present some results connecting nonnegativity of a matrix and its spectral radius.
\begin{lem} (Theorem 2.1.11, \cite {bp4})\label{com}
Let $A\in\mathbb {R}^{n\times n}$ and $A\geq 0$. Then $\alpha x\leq Ax$, $x\geq 0\Rightarrow \alpha \leq \rho(A)$ and $Ax\leq \beta x$, $x> 0 \Rightarrow \rho(A)\leq \beta$.
\end{lem}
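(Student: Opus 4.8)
The plan is to derive both implications from Gelfand's formula $\rho(A)=\lim_{k\to\infty}\|A^{k}\|^{1/k}$ (valid for any submultiplicative matrix norm) together with the elementary fact that multiplying an entrywise inequality on the left by the nonnegative matrix $A$ preserves it.

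For the first implication I would dispose of the trivial cases first. Since $A\geq 0$ we have $\rho(A)\geq 0$, so there is nothing to prove when $\alpha\leq 0$; hence assume $\alpha>0$, and also $x\neq 0$ (the hypothesis being understood for a nonzero $x\geq 0$). From $\alpha x\leq Ax$, multiplying by $\alpha>0$ and by $A\geq 0$ and using transitivity, an easy induction yields $\alpha^{k}x\leq A^{k}x$ for every $k\geq 1$. Applying the monotone, submultiplicative norm $\|\cdot\|_{1}$ to the chain $0\leq\alpha^{k}x\leq A^{k}x$ gives $\alpha^{k}\|x\|_{1}\leq\|A^{k}x\|_{1}\leq\|A^{k}\|_{1}\,\|x\|_{1}$; cancelling $\|x\|_{1}>0$, taking $k$-th roots, and letting $k\to\infty$ gives $\alpha\leq\rho(A)$.

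For the second implication I would again settle signs first: since $x>0$ and $A\geq 0$ we have $Ax\geq 0$, so $Ax\leq\beta x$ forces $\beta\geq 0$. The same induction now gives $A^{k}x\leq\beta^{k}x$ for all $k$. Here strict positivity of $x$ is used: writing $e=(1,\dots,1)^{t}$, $m=\min_{i}x_{i}>0$ and $M=\max_{i}x_{i}$, we have $me\leq x\leq Me$, whence $m\,A^{k}e\leq A^{k}x\leq\beta^{k}x\leq\beta^{k}Me$, i.e. $A^{k}e\leq(M/m)\beta^{k}e$. Since $A^{k}\geq 0$, the norm $\|A^{k}\|_{\infty}$ equals the largest row sum of $A^{k}$, namely $\|A^{k}e\|_{\infty}\leq(M/m)\beta^{k}$; taking $k$-th roots and letting $k\to\infty$ gives $\rho(A)\leq\beta$.

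The only genuinely delicate point is the second implication, where the hypothesis must be invoked with $x$ strictly positive: a nonzero nonnegative $x$ does not suffice (for the $2\times 2$ matrix with rows $(0,1)$ and $(0,2)$ and $x=(1,0)^{t}$ one has $Ax=0\leq 0\cdot x$ while $\rho(A)=2$). The role of $x>0$ is precisely to sandwich it as $me\leq x\leq Me$ with $m>0$, which converts the iterated inequality into an honest bound on $\|A^{k}\|_{\infty}$. Everything else is the two-line induction plus Gelfand's formula. One could alternatively quote the Perron--Frobenius theorem (a nonnegative left Perron eigenvector $w$ of $A$, then pair $Ax\leq\beta x$ with $w^{t}$), but since that machinery is not developed in the preliminaries the norm-iteration argument is cleaner and self-contained.
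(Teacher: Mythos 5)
This lemma is stated in the paper without proof --- it is quoted as Theorem 2.1.11 of Berman and Plemmons \cite{bp4} --- so there is no in-paper argument to measure you against; judged on its own, your proof is correct. The induction $\alpha^{k}x\leq A^{k}x$ (respectively $A^{k}x\leq\beta^{k}x$), the monotone norm estimate, and Gelfand's formula $\rho(A)=\lim_{k\to\infty}\|A^{k}\|^{1/k}$ do give both implications, and you handle the genuinely delicate points properly: the first implication must be read with $x\geq 0$, $x\neq 0$ (as in the cited source), the sign cases $\alpha\leq 0$ and $\beta\geq 0$ are disposed of before the induction, and your $2\times 2$ example correctly shows that strict positivity of $x$ cannot be weakened in the second implication. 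Your route does differ from the standard one in the cited reference, which obtains such bounds from Perron--Frobenius theory (pairing with a nonnegative left Perron vector, or the Collatz--Wielandt characterization of $\rho(A)$); the Gelfand-based argument trades that structural machinery for an analytic limit formula, which keeps the proof self-contained and independent of Theorem \ref{Theorem2}, at the cost of being somewhat outside the nonnegative-matrix toolkit the rest of the paper leans on. Either way, the statement as used in the paper is fully justified by your argument.
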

\begin{thm}(Theorem 3.16, \cite{vg})\label{Theorem1}
  Let $B\in\mathbb {R}^{n\times n}$ and $B\geq 0$. Then $\rho(B)< 1$ if and only if $(I-B)^{-1}$ exists and
  $(I-B)^{-1}=\sum_{k=0}^{\infty} B^{k}\geq 0$.
   \end{thm}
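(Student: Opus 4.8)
The plan is to establish the two implications separately, relying only on elementary properties of the spectral radius together with the fact that an entrywise limit of nonnegative matrices is again nonnegative.

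\textbf{Forward implication.} Assume $\rho(B)<1$. Then $1$ is not an eigenvalue of $B$, so $I-B$ is invertible. For every $N$ one has the telescoping identity $(I-B)\sum_{k=0}^{N}B^{k}=I-B^{N+1}$, hence $\sum_{k=0}^{N}B^{k}=(I-B)^{-1}\bigl(I-B^{N+1}\bigr)$. The crucial point is that $B^{N+1}\to 0$ as $N\to\infty$; this is exactly where $\rho(B)<1$ is used, and I would argue it either from the Jordan form of $B$ (each Jordan block contributes entries of the shape $\binom{k}{j}\lambda^{\,k-j}$ with $|\lambda|<1$, which tend to $0$) or from Gelfand's formula $\rho(B)=\lim_{k}\|B^{k}\|^{1/k}$ (choose $\rho(B)<r<1$, so that $\|B^{k}\|\le r^{k}$ for all large $k$). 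Letting $N\to\infty$ then gives $(I-B)^{-1}=\sum_{k=0}^{\infty}B^{k}$. Since $B\ge 0$, every partial sum $\sum_{k=0}^{N}B^{k}$ is entrywise nonnegative, and therefore so is the limit $(I-B)^{-1}$.

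\textbf{Converse.} Assume $(I-B)^{-1}$ exists and $(I-B)^{-1}=\sum_{k=0}^{\infty}B^{k}\ge 0$. In particular the matrix series converges entrywise, so its general term must vanish: $B^{k}\to 0$. If $\lambda\in\mathbb{C}$ is any eigenvalue of $B$ with eigenvector $x\neq 0$, then $\lambda^{k}x=B^{k}x\to 0$, which forces $|\lambda|<1$; taking the maximum over all eigenvalues yields $\rho(B)<1$.

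The only step demanding real work is the limit $B^{N+1}\to 0$ in the forward direction, and I expect this to be the main (though entirely standard) obstacle. I would note that nonnegativity of $B$ plays no role in the convergence of the Neumann series itself; it is needed only to conclude $(I-B)^{-1}\ge 0$. Hence no Perron--Frobenius-type input such as Lemma~\ref{com} is required, although one could alternatively obtain the converse from the fact that, for $B\ge 0$, $\rho(B)$ is itself an eigenvalue of $B$ and so $B^{k}\to 0$ would be impossible unless $\rho(B)<1$.
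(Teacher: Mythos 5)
Your argument is correct and complete: the forward direction via the telescoping identity plus $B^{N+1}\to 0$ (Jordan form or Gelfand), and the converse from termwise convergence forcing $B^{k}\to 0$ and hence $|\lambda|<1$ for every eigenvalue, with $B\geq 0$ used only to get $(I-B)^{-1}\geq 0$. The paper itself gives no proof — it quotes this as Theorem 3.16 of Varga's book as a preliminary — and your Neumann-series argument is essentially the standard textbook proof of that result, so there is nothing of substance to contrast.
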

   The next theorem is a part of the Perron-Frobenius theorem.
   \begin{thm}(Theorem 2.20, \cite{vg})\label{Theorem2}
Let $A\in\mathbb {R}^{n\times n}$ and $A\geq 0$ . Then
\newline
$(i)$ $A$ has a nonnegative real eigenvalue equal to the spectral radius.
\newline
$(ii)$ There exists a nonnegative real eigenvector for its  spectral radius.
   \end{thm}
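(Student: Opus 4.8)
\noindent\emph{Proof proposal.} This is the Perron--Frobenius statement for a nonnegative matrix, and I would prove it by the standard two-step route: first for strictly positive matrices, then for a general $A\ge 0$ via a perturbation-and-compactness limit. For a matrix $B$ with all entries positive I would use the Collatz--Wielandt functional: for $x\ge 0$, $x\ne 0$ put
\[
\theta(x)=\min\Bigl\{\tfrac{(Bx)_i}{x_i}\ :\ x_i>0\Bigr\},
\]
the largest scalar with $\theta(x)\,x\le Bx$; Lemma~\ref{com} then gives $\theta(x)\le\rho(B)$. Since $B\ge 0$, applying $B$ to $\theta(x)x\le Bx$ shows $\theta$ does not decrease along $x\mapsto Bx$, so $\theta^{\ast}:=\sup\{\theta(x):x\ge 0,\ \|x\|_{1}=1\}$ equals the supremum of $\theta$ over the compact set $\{Bx/\|Bx\|_{1}:x\ge 0,\ \|x\|_{1}=1\}$, whose members are strictly positive; there $\theta$ is continuous, so the supremum is attained, at some $z>0$. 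If $Bz\ne\theta^{\ast}z$ then $0\ne Bz-\theta^{\ast}z\ge 0$, hence $B(Bz-\theta^{\ast}z)>0$ and $\theta(Bz)>\theta^{\ast}$, a contradiction; thus $Bz=\theta^{\ast}z$ with $z>0$. Finally, for any eigenpair $(\lambda,v)$ of $B$ the triangle inequality gives $|\lambda|\,|v|\le B|v|$, so $|\lambda|\le\theta(|v|)\le\theta^{\ast}$; taking $|\lambda|=\rho(B)$ and combining with $\theta^{\ast}\le\rho(B)$ yields $\rho(B)=\theta^{\ast}$, an eigenvalue with a positive eigenvector.

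For a general $A\ge 0$ let $E$ be the all-ones matrix and set $A_{\varepsilon}=A+\varepsilon E>0$ for $\varepsilon>0$. By the previous paragraph $\rho(A_{\varepsilon})$ is an eigenvalue of $A_{\varepsilon}$ with a positive eigenvector $x_{\varepsilon}$, which I normalise to $\|x_{\varepsilon}\|_{2}=1$. Since the roots of the characteristic polynomial vary continuously with the entries, $\rho(A_{\varepsilon})\to\rho(A)\ge 0$ as $\varepsilon\to 0^{+}$. The vectors $x_{\varepsilon}$ lie in the compact set $\{x\ge 0:\|x\|_{2}=1\}$, so along a sequence $\varepsilon_{k}\downarrow 0$ one has $x_{\varepsilon_{k}}\to x$ with $x\ge 0$ and $\|x\|_{2}=1$; passing to the limit in $A_{\varepsilon_{k}}x_{\varepsilon_{k}}=\rho(A_{\varepsilon_{k}})x_{\varepsilon_{k}}$ gives $Ax=\rho(A)x$. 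This establishes $(i)$ and, since $x\ge 0$, also $(ii)$.

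The delicate point is the positive case: $\theta$ is only upper semicontinuous on the simplex, so its supremum need not be attained there, and one must exploit the strict positivity of $B$ to replace the simplex by its compact image under $B$, on which all coordinates are bounded away from $0$ and $\theta$ becomes continuous. Once that is settled, the passage to a general $A\ge 0$ is routine, relying only on compactness of the nonnegative unit sphere and continuity of the spectral radius in the matrix entries.
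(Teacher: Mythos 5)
The paper does not prove this statement at all: it is quoted as a known preliminary (Theorem 2.20 of Varga's book, the weak Perron--Frobenius theorem), so there is no in-paper argument to compare against. Your blind proof is a correct and essentially classical one: the Collatz--Wielandt functional $\theta$ for a strictly positive matrix $B$, with the key observations that $\theta$ is nondecreasing along $x\mapsto Bx$, that the normalised image of the simplex under $B$ is a compact set of strictly positive vectors on which $\theta$ is continuous (this is exactly the right fix for the upper-semicontinuity issue you flag), that a maximiser $z>0$ must satisfy $Bz=\theta^{*}z$ by strict positivity, and that $|\lambda|\,|v|\le B|v|$ forces $\rho(B)=\theta^{*}$; the passage to general $A\ge 0$ via $A_{\varepsilon}=A+\varepsilon E$, continuity of the spectral radius, and compactness of the nonnegative unit sphere is also sound and yields both a nonnegative eigenvalue equal to $\rho(A)$ and a nonnegative eigenvector. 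Your use of Lemma~\ref{com} to get $\theta(x)\le\rho(B)$ fits the paper's toolkit. The only cosmetic remark is that for the purposes of this paper such a proof would normally be omitted (as the authors do) since the result is standard; but as a self-contained verification your argument has no gaps.
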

   \begin{lem}(Lemma 2.2, \cite{ss})\label {spectral}
 Let $A=\begin{pmatrix} B & C\\
                     I & 0
                      \end{pmatrix} \geq 0$ and $\rho(B+C)< 1$. Then, $\rho(A)< 1$.
\end{lem}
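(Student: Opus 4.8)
The goal is to show that if $A=\begin{pmatrix} B & C\\ I & 0\end{pmatrix}\ge 0$ and $\rho(B+C)<1$, then $\rho(A)<1$. The plan is to work with an eigenvalue $\lambda$ of $A$ of maximum modulus and reduce the spectral condition on the block matrix to the spectral condition on $B+C$. Since $A\ge 0$, Theorem \ref{Theorem2} guarantees that $\rho(A)$ is itself an eigenvalue of $A$ with a nonnegative eigenvector $\begin{pmatrix} u\\ v\end{pmatrix}\ge 0$, $\begin{pmatrix} u\\ v\end{pmatrix}\ne \0$. Writing out the eigenvalue equation block-wise gives $Bu+Cv=\rho(A)u$ and $u=\rho(A)v$.

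First I would dispose of the degenerate case $\rho(A)=0$, which trivially satisfies $\rho(A)<1$; so assume $\rho(A)>0$. Then from $u=\rho(A)v$ we get $v=\rho(A)^{-1}u$, and substituting into the first block equation yields
\[
Bu+\rho(A)^{-1}Cu=\rho(A)u,
\]
i.e. $\rho(A)Bu + Cu = \rho(A)^2 u$. I also note $u\neq \0$: if $u=\0$ then $v=\rho(A)^{-1}u=\0$ forcing the eigenvector to vanish, a contradiction. The idea now is to compare $\rho(A)$ with $\rho(B+C)$ using Lemma \ref{com}.

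The key step is to manipulate $\rho(A)Bu+Cu=\rho(A)^2u$ into an inequality of the form $(B+C)u \ge \alpha u$ or $(B+C)u\le \beta u$ with $\alpha,\beta$ controlled by $\rho(A)$. Suppose, for contradiction, that $\rho(A)\ge 1$. Since $B,C\ge 0$ and $u\ge 0$, we have $\rho(A)Bu\ge Bu$ (because $\rho(A)\ge 1$), hence
\[
\rho(A)^2 u = \rho(A)Bu + Cu \ge Bu + Cu = (B+C)u.
\]
Thus $(B+C)u \le \rho(A)^2 u$ with $u\ge 0$, $u\neq\0$. To invoke the second implication of Lemma \ref{com} I would want $u>0$ strictly; if $u$ has zero entries one passes instead to the first implication in a symmetric way, or argues on the support of $u$. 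Assuming $u>0$, Lemma \ref{com} gives $\rho(B+C)\le \rho(A)^2$ — which is the wrong direction. So this naive bound is too weak, and the main obstacle is extracting the correct inequality.

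The resolution is to square more carefully. From $\rho(A)Bu+Cu=\rho(A)^2u$, divide by $\rho(A)$ (legitimate since $\rho(A)>0$): $Bu+\rho(A)^{-1}Cu=\rho(A)u$. If $\rho(A)\ge 1$ then $\rho(A)^{-1}\le 1$, so $\rho(A)^{-1}Cu\le Cu$, whence
\[
\rho(A)u = Bu+\rho(A)^{-1}Cu \le Bu+Cu=(B+C)u.
\]
Now $(B+C)u\ge \rho(A)u$ with $u\ge 0$, $u\ne\0$, so the first implication of Lemma \ref{com} gives $\rho(A)\le \rho(B+C)<1$, contradicting $\rho(A)\ge 1$. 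Hence $\rho(A)<1$, as desired. I expect the only delicate point to be the application of Lemma \ref{com}: its hypotheses as stated require $x\ge 0$ (for the first implication) which is exactly what we have, so no strict positivity is actually needed for the direction we use. I would double-check that $B+C\ge 0$ (immediate from $B,C\ge 0$) so that Lemma \ref{com} applies, and that the reduction handles the case where $A$ or $B+C$ is the zero matrix trivially.
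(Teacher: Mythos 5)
Your proof is correct. Note that the paper does not prove Lemma \ref{spectral} at all --- it is quoted from Shen and Huang \cite{ss} as a preliminary --- so there is no in-paper proof to compare against; your argument (Perron--Frobenius applied to the nonnegative block matrix, elimination of the second block to get $Bu+\rho(A)^{-1}Cu=\rho(A)u$ with $u\geq 0$, $u\neq 0$, then the contradiction $\rho(A)\leq\rho(B+C)<1$ from the first implication of Lemma \ref{com} when $\rho(A)\geq 1$) is the standard one, and it is precisely the technique the authors themselves use in Theorems \ref{dcr1} and \ref{mr1}. One small remark: the exploratory middle paragraph (the ``wrong direction'' bound $\rho(B+C)\leq\rho(A)^2$ and the worry about needing $u>0$) is unnecessary, since your final argument only invokes the $\alpha x\leq Ax$, $x\geq 0$ half of Lemma \ref{com}, for which no strict positivity of $u$ is required.
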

As we mentioned in the introduction, a decomposition $A=U-V$ of $A\in \mathbb{R}^{m\times n}$ is called a  proper splitting if  $\mathcal{R}(A)=\mathcal{R}(U)$ and $\mathcal{N}(A)=\mathcal{N}(U).$ The next two results are on proper splittings.
\begin{thm}(Theorem 1, \cite {bp2})
If $A=U-V$ is a proper splitting of $A\in \mathbb{R}^{m\times n}$, then $AA^{\dagger}=UU^{\dagger}$ and
$A^{\dagger}A=U^{\dagger}U.$
\end{thm}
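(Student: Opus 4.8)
The plan is to establish the two identities $AA^{\dagger}=UU^{\dagger}$ and $A^{\dagger}A=U^{\dagger}U$ by recognizing each side as an orthogonal projector and then showing the projectors coincide because they have the same range. Recall from the preliminaries that $AA^{\dagger}=P_{\mathcal{R}(A)}$ and $UU^{\dagger}=P_{\mathcal{R}(U)}$; since a proper splitting demands $\mathcal{R}(A)=\mathcal{R}(U)$, the orthogonal projector onto that common subspace is unique, hence $AA^{\dagger}=UU^{\dagger}$. The analysis for the second identity is parallel: $A^{\dagger}A=P_{\mathcal{R}(A^{t})}$ and $U^{\dagger}U=P_{\mathcal{R}(U^{t})}$, so it suffices to argue that $\mathcal{R}(A^{t})=\mathcal{R}(U^{t})$.

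The one point that needs a short argument is why $\mathcal{N}(A)=\mathcal{N}(U)$ forces $\mathcal{R}(A^{t})=\mathcal{R}(U^{t})$. This is immediate from the standard orthogonal decomposition $\mathbb{R}^{n}=\mathcal{R}(A^{t})\oplus\mathcal{N}(A)$ (and likewise for $U$): the range of the transpose is precisely the orthogonal complement of the null space, so equal null spaces give equal orthogonal complements, i.e.\ $\mathcal{R}(A^{t})=\mathcal{N}(A)^{\perp}=\mathcal{N}(U)^{\perp}=\mathcal{R}(U^{t})$. Combined with the fact (already listed in the preliminaries) that $\mathcal{R}(A^{\dagger})=\mathcal{R}(A^{t})$ and $\mathcal{N}(A^{\dagger})=\mathcal{N}(A^{t})$, everything reduces to the uniqueness of orthogonal projectors onto a given subspace.

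I do not anticipate a genuine obstacle here; the result is essentially a bookkeeping exercise with the four Penrose equations. The only place where one must be slightly careful is to invoke the correct characterizations $AA^{\dagger}=P_{\mathcal{R}(A)}$ and $A^{\dagger}A=P_{\mathcal{R}(A^{t})}$ as \emph{orthogonal} projectors (which follows from the symmetry conditions $(AX)^{t}=AX$ and $(XA)^{t}=XA$ in the definition of the Moore--Penrose inverse), since it is this orthogonality that guarantees uniqueness once the range is fixed. With that in hand, the two displayed equalities follow in one line each.
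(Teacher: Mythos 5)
Your argument is correct and is the standard one: the paper itself cites this result from Berman and Plemmons without reproducing a proof, and the intended justification is exactly what you give, namely that $AA^{\dagger}$ and $UU^{\dagger}$ are the orthogonal projectors onto $\mathcal{R}(A)=\mathcal{R}(U)$, while $A^{\dagger}A$ and $U^{\dagger}U$ are the orthogonal projectors onto $\mathcal{R}(A^{t})=\mathcal{N}(A)^{\perp}=\mathcal{N}(U)^{\perp}=\mathcal{R}(U^{t})$, and such projectors are unique. No gaps; your care in noting that the symmetry conditions in the Penrose equations are what make these projectors orthogonal (hence unique for a given range) is precisely the right point to flag.
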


\begin{thm} ( Theorem 3, \cite{bp2}) \label{eq}
Let $A=U-V$ be a  proper splitting of $A\in \mathbb{R}^{m\times n}$ such that $U^{\dagger}\geq 0$ and $U^{\dagger}V\geq 0$. Then the following are equivalent:\\
(i)   $A^{\dagger}\geq 0$.\\
(ii)  $A^{\dagger}V\geq 0$.\\
(iii) $\rho(U^{\dagger}V)<1.$
\end{thm}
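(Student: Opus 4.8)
The plan is to establish the cyclic chain $(i)\Rightarrow(ii)\Rightarrow(iii)\Rightarrow(i)$. The implication $(i)\Rightarrow(ii)$ is immediate: from $A^{\dagger}\geq 0$ and $V\geq 0$ we get $A^{\dagger}V\geq 0$. Everything else rests on a factorization identity that I would record first.

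The preliminary step is to show that $A=U(I-U^{\dagger}V)$, that $I-U^{\dagger}V$ is invertible, and that $A^{\dagger}=(I-U^{\dagger}V)^{-1}U^{\dagger}$. Since $V=U-A$ and $\mathcal{R}(A)=\mathcal{R}(U)$, one has $\mathcal{R}(V)\subseteq\mathcal{R}(U)$, hence $UU^{\dagger}V=V$ and therefore $A=U-UU^{\dagger}V=U(I-U^{\dagger}V)$. For invertibility: if $(I-U^{\dagger}V)x=0$ then $Ax=U(I-U^{\dagger}V)x=0$, so $x\in\mathcal{N}(A)=\mathcal{N}(U)$; but $\mathcal{N}(A)=\mathcal{N}(U)$ also forces $Vx=(U-A)x=0$, so $(I-U^{\dagger}V)x=x$, and combining gives $x=0$. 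To identify the Moore--Penrose inverse I would verify the four Penrose equations for $G:=(I-U^{\dagger}V)^{-1}U^{\dagger}$: using $A=U(I-U^{\dagger}V)$ one finds $AG=UU^{\dagger}$ (symmetric) and $AGA=A$, $GAG=G$ are routine; the one non-automatic point is that $GA=(I-U^{\dagger}V)^{-1}U^{\dagger}U(I-U^{\dagger}V)$ equals $U^{\dagger}U=P_{\mathcal{R}(A^{t})}$ and is symmetric, which holds because $U^{\dagger}V$ commutes with $P_{\mathcal{R}(A^{t})}$ — indeed $\mathcal{R}(U^{\dagger}V)\subseteq\mathcal{R}(U^{\dagger})=\mathcal{R}(A^{t})$ and $U^{\dagger}V$ annihilates $\mathcal{N}(A)=\mathcal{R}(A^{t})^{\perp}$, so $P_{\mathcal{R}(A^{t})}U^{\dagger}V=U^{\dagger}V=U^{\dagger}VP_{\mathcal{R}(A^{t})}$ and hence $I-U^{\dagger}V$ commutes with $P_{\mathcal{R}(A^{t})}$ as well. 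A one-line computation then rewrites $A^{\dagger}V=(I-U^{\dagger}V)^{-1}U^{\dagger}V=(I-U^{\dagger}V)^{-1}-I$, i.e. $(I-U^{\dagger}V)^{-1}=I+A^{\dagger}V$.

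Granting the identity, $(iii)\Rightarrow(i)$ is short. If $\rho(U^{\dagger}V)<1$, then since $U^{\dagger}V\geq 0$, Theorem \ref{Theorem1} gives $(I-U^{\dagger}V)^{-1}=\sum_{k\geq 0}(U^{\dagger}V)^{k}\geq 0$; hence $A^{\dagger}=(I-U^{\dagger}V)^{-1}U^{\dagger}$ is a product of nonnegative matrices, so $A^{\dagger}\geq 0$.

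The substantive step is $(ii)\Rightarrow(iii)$, which I would do by a Perron--Frobenius argument. Put $H=U^{\dagger}V\geq 0$ and $K=A^{\dagger}V\geq 0$; by Theorem \ref{Theorem2} choose $x\geq 0$, $x\neq 0$, with $Hx=\rho(H)x$, and write $\rho=\rho(H)\geq 0$. Then $(I-H)x=(1-\rho)x$, and since $I-H$ is invertible we get $\rho\neq 1$; applying $(I-H)^{-1}=I+K$ gives $x=(1-\rho)(I+K)x$, hence $Kx=\tfrac{\rho}{1-\rho}\,x$. As $Kx\geq 0$ while $x\geq 0$ and $x\neq 0$, the scalar $\tfrac{\rho}{1-\rho}$ must be nonnegative, which (together with $\rho\geq 0$) rules out $\rho>1$; with $\rho\neq 1$ this yields $\rho(U^{\dagger}V)<1$. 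I expect the main obstacle to be the preliminary identity $A^{\dagger}=(I-U^{\dagger}V)^{-1}U^{\dagger}$ — precisely the commutation of $U^{\dagger}V$ with $P_{\mathcal{R}(A^{t})}$, which is where both defining properties $\mathcal{R}(A)=\mathcal{R}(U)$ and $\mathcal{N}(A)=\mathcal{N}(U)$ of a proper splitting are used; once it is in place, the two remaining implications are brief.
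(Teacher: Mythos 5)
This statement is quoted in the paper as a known preliminary (Theorem 3 of Berman and Plemmons \cite{bp2}) and is not proved there, so your argument has to stand on its own. Most of it does: the factorization $A=U(I-U^{\dagger}V)$, the invertibility of $I-U^{\dagger}V$, the verification of the Penrose equations for $(I-U^{\dagger}V)^{-1}U^{\dagger}$ (including the commutation of $U^{\dagger}V$ with $U^{\dagger}U=P_{\mathcal{R}(A^{t})}$, which correctly uses both $\mathcal{R}(A)=\mathcal{R}(U)$ and $\mathcal{N}(A)=\mathcal{N}(U)$), the Neumann-series implication (iii)$\Rightarrow$(i), and the Perron--Frobenius argument for (ii)$\Rightarrow$(iii) are all sound.

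The genuine gap is the link you call immediate, (i)$\Rightarrow$(ii). You justify it ``from $A^{\dagger}\geq 0$ and $V\geq 0$,'' but $V\geq 0$ is not a hypothesis: the theorem assumes only $U^{\dagger}\geq 0$ and $U^{\dagger}V\geq 0$ (a proper \emph{weak} regular splitting), not $V\geq 0$. Without $V\geq 0$ there is no one-line passage from (i) to (ii): writing $H=U^{\dagger}V$, one has $A^{\dagger}V=(I-H)^{-1}H$, and nonnegativity of $A^{\dagger}=(I-H)^{-1}U^{\dagger}$ does not visibly transfer to this product, so your cycle is broken at its first edge. The standard repair is to prove (i)$\Rightarrow$(iii) instead: from $\sum_{k=0}^{m}H^{k}U^{\dagger}=(I-H^{m+1})A^{\dagger}=A^{\dagger}-H^{m+1}A^{\dagger}\leq A^{\dagger}$, the entrywise nondecreasing partial sums are bounded above, hence $H^{m}U^{\dagger}\to 0$; since $\mathcal{N}(U)=\mathcal{N}(A)\subseteq\mathcal{N}(V)$ gives $HU^{\dagger}U=H$, one has $H^{m}=\bigl(H^{m}U^{\dagger}\bigr)U\to 0$, so $\rho(H)<1$. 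Then (iii)$\Rightarrow$(ii) is again a Neumann series, $A^{\dagger}V=\sum_{k\geq 1}H^{k}\geq 0$, and combined with your (ii)$\Rightarrow$(iii) and (iii)$\Rightarrow$(i) all equivalences follow. With that single link replaced, your proof is complete.
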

 Note that, a proper splitting $A=U-V$  of $A\in \mathbb{R}^{m\times n}$, satisfying the conditions $U^{\dagger}\geq 0$ and $U^{\dagger}V\geq 0$ is named as proper weak regular splitting by Jena et al. \cite{ljdmsp}.

   We now turn to results on double splittings. For $A\in \mathbb{R}^{m\times n}$, a decomposition $A=P-R+S$ is called a $double~ splitting$ of $A$.  A double  splitting $A=P-R+S$ of $A\in \mathbb{R}^{m\times n}$ is called a  $ proper~ double ~splitting$  if $\mathcal{R}(A)=\mathcal{R}(P)$
 and $\mathcal{N}(A)=\mathcal{N}(P)$. Again, consider the following   rectangular linear system
\begin{equation}
\label{I1.4} Ax=b,
\end{equation}
where $A\in \mathbb{R}^{m\times n}$ (this time $A$ need not be nonsingular), $b\in \mathbb{R}^{m\times 1}$ is a given vector and
$x\in \mathbb{R}^{n\times 1}$ is an unknown vector.
 Similar to the nonsingular case, if we  use proper double splitting $A=P-R+S$ to solve (\ref{I1.4}), it leads to the following iterative scheme:
 \begin{align}
    \label{I1.5}                    x^{k+1}=P^{\dagger}Rx^{k}-P^{\dagger}Sx^{k-1}+P^{\dagger}b, ~\text{where} ~k= 1, 2, ...
                        \end{align}
   Motivated by $Wo\acute{z}nicki's$ {\cite{wn} } idea ,  equation (\ref{I1.5}) can be written as
 \[
 \begin{pmatrix} x^{k+1}\\
                      x^{k}
                      \end{pmatrix}
                      =\begin{pmatrix} P^{\dagger}R & -P^{\dagger}S\\
                     I & 0
                      \end{pmatrix}
                      \begin{pmatrix} x^{k}\\
                      x^{k-1}
                      \end{pmatrix}
                      +\begin{pmatrix} P^{\dagger}b\\
                      0
                      \end{pmatrix}.
                      \]
       If we denote,  $X^{k+1}=\begin{pmatrix} x^{k+1}\\
                      x^{k}

                      \end{pmatrix},$
          $W= \begin{pmatrix} P^{\dagger}R & -P^{\dagger}S\\
                     I & 0
                      \end{pmatrix},$
                      $X^{k}=\begin{pmatrix} x^{k}\\
                      x^{k-1}
                     \end{pmatrix}$\\
                     and $B=\begin{pmatrix} P^{\dagger}b\\
                      0
                      \end{pmatrix},$ then we get
                      \begin{equation}
  \label{I1.6}         X^{k+1}=WX^{k}+B,      k=1,2...
          \end{equation}
          Then, it can be shown that the iterative method (\ref{I1.6}) converges to to the unique least square solution of minimum norm, of (\ref{I1.4}) if and only if  $\rho(W)<1.$

Next, we introduce some subclasses of   proper double   splittings.
\begin{defn}
  Let $A\in \mathbb{R}^{m\times n}$. A proper double  splitting $A=P-R+S$ is called\\
(i) regular proper double splitting if $P^{\dagger}\geq 0$, $R\geq 0$ and $-S\geq 0.$\\
(ii) weak regular proper double splitting  if  $P^{\dagger}\geq 0$, $P^{\dagger}R\geq 0$ and $-P^{\dagger}S\geq 0.$
\end{defn}
Note that the authors of \cite{ljdmsp}  called regular proper double splittings and weak regular proper double splittings as double proper regular splittings and double proper weak regular splittings, respectively. However, we feel that the present usage is more appropriate and hence we  continue the same nomenclature throughout this manuscript.

The next result gives the relation between the spectral radius of the iteration matrices associated with a single splitting and a double splitting.
\begin{thm}(Theorem 4.3, \cite{deb} )
Let $A=P-R+S$ be a weak  regular proper double splitting of $A\in \mathbb{R}^{m\times n}.$ Then $\rho(W)< 1$ if and only if $\rho(U^{\dagger}V)< 1,$
where $U=P$ and $V=R-S.$
\end{thm}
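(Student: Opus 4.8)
The plan is to reduce the statement to the single splitting $A = U - V$ with $U = P$, $V = R - S$, and then to run everything through the three elementary tools already at hand: Lemma~\ref{spectral}, the Perron--Frobenius statement Theorem~\ref{Theorem2}, and the comparison Lemma~\ref{com}. First I would record the structural facts. Since $A = P - R + S$ is a proper double splitting, $\mathcal{R}(A) = \mathcal{R}(P) = \mathcal{R}(U)$ and $\mathcal{N}(A) = \mathcal{N}(P) = \mathcal{N}(U)$, so $A = U - V$ is a proper splitting; moreover $U^{\dagger} = P^{\dagger} \ge 0$ and $U^{\dagger} V = P^{\dagger}(R - S) = P^{\dagger} R + (-P^{\dagger} S) \ge 0$, using the two nonnegativity conditions in the definition of a weak regular proper double splitting. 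Consequently the block matrix $W = \begin{pmatrix} P^{\dagger} R & -P^{\dagger} S \\ I & 0 \end{pmatrix}$ satisfies $W \ge 0$, and the identity on which the whole proof pivots is that the block sum $P^{\dagger} R + (-P^{\dagger} S)$ equals $U^{\dagger} V$.

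For the direction $\rho(U^{\dagger} V) < 1 \Rightarrow \rho(W) < 1$, I would simply apply Lemma~\ref{spectral} with $B = P^{\dagger} R$ and $C = -P^{\dagger} S$: both hypotheses hold because $W \ge 0$ and $\rho(B + C) = \rho(U^{\dagger} V) < 1$, and the conclusion is exactly $\rho(W) < 1$. This half needs no computation beyond matching notation.

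For the converse $\rho(W) < 1 \Rightarrow \rho(U^{\dagger} V) < 1$, I would argue by contraposition. Since $U^{\dagger} V \ge 0$, Theorem~\ref{Theorem2} gives $x \ge 0$, $x \ne 0$, with $U^{\dagger} V x = \mu x$ where $\mu = \rho(U^{\dagger} V)$. The key move is the choice of test vector $\begin{pmatrix} x \\ x \end{pmatrix} \in \mathbb{R}^{2n}$: one computes $W \begin{pmatrix} x \\ x \end{pmatrix} = \begin{pmatrix} P^{\dagger} R x - P^{\dagger} S x \\ x \end{pmatrix} = \begin{pmatrix} \mu x \\ x \end{pmatrix}$. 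If $\mu \ge 1$ then $(\mu - 1)x \ge 0$, hence $W \begin{pmatrix} x \\ x \end{pmatrix} \ge \begin{pmatrix} x \\ x \end{pmatrix}$ with $\begin{pmatrix} x \\ x \end{pmatrix} \ge 0$ and nonzero; since $W \ge 0$, the first part of Lemma~\ref{com} forces $1 \le \rho(W)$, contradicting $\rho(W) < 1$. Therefore $\mu < 1$.

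The only step requiring any insight is guessing the test vector $\begin{pmatrix} x \\ x \end{pmatrix}$ in the converse, and even that is natural once one notices $W \begin{pmatrix} y \\ y \end{pmatrix} = \begin{pmatrix} (P^{\dagger}R - P^{\dagger}S) y \\ y \end{pmatrix}$. I do not expect a genuine obstacle: the Moore--Penrose inverse enters only through $U^{\dagger} = P^{\dagger}$, so the argument is formally the same as in the nonsingular case of Shen and Huang, with $P^{-1}$ replaced by $P^{\dagger}$.
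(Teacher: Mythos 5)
Your proof is correct. The paper does not prove this statement at all (it is quoted as Theorem 4.3 of Mishra \cite{deb} in the preliminaries), so there is no in-paper argument to compare against; your two directions --- Lemma~\ref{spectral} with $B=P^{\dagger}R$, $C=-P^{\dagger}S$ for one implication, and the test vector $\begin{pmatrix}x\\ x\end{pmatrix}$ built from a Perron eigenvector of $U^{\dagger}V$ (Theorem~\ref{Theorem2}) together with Lemma~\ref{com} for the converse --- are sound, self-contained, and use only the nonnegativity hypotheses $P^{\dagger}R\geq 0$, $-P^{\dagger}S\geq 0$ (the proper-splitting range and null space conditions are not even needed for this equivalence).
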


We conclude this section with a  convergence theorem for a  proper double splitting of a monotone matrix.
\begin{thm} (Theorem 3.6, \cite{ljdmsp} )\label{con}
Let $A\in \mathbb{R}^{m\times n}$ such that $A^{\dagger}\geq 0$. Let $A=P-R+S$ be a  weak regular proper double splitting. Then, $\rho(W)< 1$.
\end{thm}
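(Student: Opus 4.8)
The plan is to reduce this double-splitting statement to a statement about an associated single proper weak regular splitting, and then invoke the block-matrix spectral-radius lemma (Lemma~\ref{spectral}). Concretely, set $U=P$ and $V=R-S$, so that $A=U-V$. Since $A=P-R+S$ is a \emph{proper} double splitting, $\mathcal{R}(A)=\mathcal{R}(P)=\mathcal{R}(U)$ and $\mathcal{N}(A)=\mathcal{N}(P)=\mathcal{N}(U)$, so $A=U-V$ is a proper splitting of $A$.

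Next I would check that this proper splitting is weak regular. Because the given double splitting is weak regular proper, we have $P^{\dagger}\geq 0$, $P^{\dagger}R\geq 0$ and $-P^{\dagger}S\geq 0$. Hence $U^{\dagger}=P^{\dagger}\geq 0$ and
\[
U^{\dagger}V=P^{\dagger}(R-S)=P^{\dagger}R+(-P^{\dagger}S)\geq 0,
\]
being a sum of two nonnegative matrices. Thus $A=U-V$ satisfies the hypotheses of Theorem~\ref{eq}; since $A^{\dagger}\geq 0$ by assumption, that theorem yields $\rho(U^{\dagger}V)<1$.

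Finally I would bring in the block structure. The iteration matrix is
\[
W=\begin{pmatrix} P^{\dagger}R & -P^{\dagger}S\\ I & 0 \end{pmatrix},
\]
and since $P^{\dagger}R\geq 0$ and $-P^{\dagger}S\geq 0$ we get $W\geq 0$; moreover the ``$B+C$'' block is exactly $P^{\dagger}R+(-P^{\dagger}S)=P^{\dagger}(R-S)=U^{\dagger}V$, whose spectral radius was just shown to be less than $1$. Applying Lemma~\ref{spectral} with $B=P^{\dagger}R$ and $C=-P^{\dagger}S$ then gives $\rho(W)<1$, as desired.

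There is no heavy computation here; the crux is recognizing the right single splitting $U=P$, $V=R-S$ attached to the double splitting and observing that ``weak regular proper double splitting'' translates precisely into ``$U^{\dagger}\geq 0$ and $U^{\dagger}V\geq 0$'', so that Theorem~\ref{eq} becomes available. One small point to keep in mind is that Lemma~\ref{spectral} is stated for square nonnegative matrices, so one should note that $P^{\dagger}R$, $-P^{\dagger}S$ and $I$ are all $n\times n$ (as $P^{\dagger}\in\mathbb{R}^{n\times m}$), whence $W\in\mathbb{R}^{2n\times 2n}$ is square and the lemma applies verbatim. (Alternatively, one could conclude directly from Theorem~4.3 of \cite{deb} once $\rho(U^{\dagger}V)<1$ has been established, bypassing the explicit block argument.)
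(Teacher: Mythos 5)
Your argument is correct. Note that the paper itself does not prove this statement: it is quoted (as Theorem 3.6 of \cite{ljdmsp}) and used as a black box, so there is no internal proof to compare against. What you wrote is a clean, self-contained derivation from the paper's own preliminaries: the passage to the single proper splitting $U=P$, $V=R-S$ is legitimate since $\mathcal{R}(P)=\mathcal{R}(A)$ and $\mathcal{N}(P)=\mathcal{N}(A)$; the weak regularity of this single splitting follows exactly as you say from $P^{\dagger}\geq 0$, $P^{\dagger}R\geq 0$, $-P^{\dagger}S\geq 0$; Theorem~\ref{eq} then gives $\rho(P^{\dagger}(R-S))<1$ from $A^{\dagger}\geq 0$; and Lemma~\ref{spectral}, applied to the nonnegative block matrix $W$ with $B=P^{\dagger}R$, $C=-P^{\dagger}S$, yields $\rho(W)<1$. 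Your dimension remark ($W\in\mathbb{R}^{2n\times 2n}$) is the right sanity check. The alternative you mention, combining Theorem~\ref{eq} with Theorem 4.3 of \cite{deb} stated just above the theorem, is equally valid and is essentially the route the cited source takes; your explicit block argument simply re-proves the relevant direction of that equivalence, so the two are the same proof at heart.
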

\section{Main Results}
In this section, the main results of this article are presented. These results extend the results of Shen and Huang \cite{ss} from square nonsingular matrices to rectangular matrices and from classical inverses to Moore-Penrose inverses.

Let $A\in \mathbb{R}^{m\times n}$. Let $A=P_{1}-R_{1}+S_{1}=P_{2}-R_{2}+S_{2}$ be two double proper splittings of $A$. Set
$W_{1}=\begin{pmatrix} P_{1}^{\dagger}R_{1} & -P_{1}^{\dagger}S_{1}\\
                     I & 0
                      \end{pmatrix}$ and
                      $W_{2}=\begin{pmatrix} P_{2}^{\dagger}R_{2} & -P_{2}^{\dagger}S_{2}\\
                     I & 0
                      \end{pmatrix}$.

  The next result gives the comparison between $\rho(W_{1})$ and $\rho(W_{2})$. As mentioned earlier, this  comparison is useful to analyse the rate of convergence of the iterative methods formulated from these double splittings, for solving linear system $Ax=b$.
\begin{thm}\label{dcr1}
Let $A\in \mathbb{R}^{m\times n}$ be such that $A^{\dagger}\geq 0$. Let $A=P_{1}-R_{1}+S_{1}$ be a   regular proper double splitting such that $P_{1}P_{1}^{\dagger}\geq 0$ and let $A=P_{2}-R_{2}+S_{2}$ be a   weak regular proper double splitting. If $P_{1}^{\dagger}\geq P_{2}^{\dagger}$ and any one of the following conditions,\\
(i) $P_{1}^{\dagger}R_{1}\geq P_{2}^{\dagger}R_{2}$\\
(ii) $P_{1}^{\dagger}S_{1}\geq P_{2}^{\dagger}S_{2}$ \\ holds, then $\rho(W_{1})\leq \rho(W_{2})< 1 $.
\end{thm}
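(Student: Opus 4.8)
The plan is to run the Perron--Frobenius argument of Shen and Huang in the Moore--Penrose setting. First, both iteration matrices are nonnegative: for $W_2$ this is immediate from $P_2^\dagger R_2\geq 0$ and $-P_2^\dagger S_2\geq 0$, and for $W_1$ the regularity $P_1^\dagger\geq 0$, $R_1\geq 0$, $-S_1\geq 0$ gives $P_1^\dagger R_1\geq 0$ and $-P_1^\dagger S_1=P_1^\dagger(-S_1)\geq 0$; in particular $A=P_1-R_1+S_1$ is also a weak regular proper double splitting. Hence Theorem~\ref{con} (applicable since $A^\dagger\geq 0$) gives $\rho(W_1)<1$ and $\rho(W_2)<1$, so the strict part of the conclusion is free and the real content is the inequality $\rho(W_1)\leq\rho(W_2)$.

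To prove it, set $\mu:=\rho(W_1)$; if $\mu=0$ there is nothing to show, so assume $0<\mu<1$. By Theorem~\ref{Theorem2} there is a nonnegative eigenvector of $W_1$ for $\mu$, and reading its block rows it has the form $\binom{u}{\mu^{-1}u}$ with $u\neq 0$ and $\mu P_1^\dagger R_1u-P_1^\dagger S_1u=\mu^2u$; since $\mu^2u=P_1^\dagger(\mu R_1-S_1)u$ we get $u\in\mathcal{R}(P_1^\dagger)=\mathcal{R}(A^t)$, so $P_1^\dagger P_1u=P_2^\dagger P_2u=A^\dagger Au=u$. The strategy is to feed $\binom{u}{\mu^{-1}u}$ into $W_2$: if we can show $W_2\binom{u}{\mu^{-1}u}\geq\mu\binom{u}{\mu^{-1}u}$, i.e. $P_2^\dagger(\mu R_2-S_2)u\geq\mu^2u$ (the second block row being an equality), then Lemma~\ref{com} applied to $W_2\geq 0$ gives $\mu\leq\rho(W_2)$, as desired.

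The core of the proof is establishing $P_2^\dagger(\mu R_2-S_2)u\geq\mu^2u$. Using $R_1-S_1=P_1-A$, rewrite $\mu R_1-S_1=\mu(P_1-A)+(\mu-1)S_1$; substituting this in the eigen-equation and using $P_1^\dagger P_1u=u$ yields $P_1^\dagger Au=\frac{1-\mu}{\mu}\bigl(\mu u+P_1^\dagger(-S_1)u\bigr)\geq 0$. Now $Au\in\mathcal{R}(A)=\mathcal{R}(P_1)$ and $P_1P_1^\dagger$ is the orthogonal projector onto $\mathcal{R}(P_1)$, so $Au=P_1\bigl(P_1^\dagger Au\bigr)$; inserting the last formula together with $P_1u=Au+R_1u-S_1u$ and solving the resulting linear relation for $Au$ gives $Au=\frac{1-\mu}{\mu^2}\bigl(\mu R_1u+\mu(-S_1)u+P_1P_1^\dagger(-S_1)u\bigr)$, and here the assumption $P_1P_1^\dagger\geq 0$ (together with $R_1\geq 0$, $-S_1\geq 0$, $u\geq 0$, $0<\mu<1$) forces $Au\geq 0$; since $P_1^\dagger\geq P_2^\dagger$ this gives $(P_1^\dagger-P_2^\dagger)Au\geq 0$. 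Finally, left-multiplying the two splittings by $P_1^\dagger$ and $P_2^\dagger$ and subtracting (using $P_1^\dagger P_1=P_2^\dagger P_2=A^\dagger A$) produces the identity $(P_1^\dagger-P_2^\dagger)A=(P_1^\dagger S_1-P_2^\dagger S_2)-(P_1^\dagger R_1-P_2^\dagger R_2)$; with it, the target inequality $P_2^\dagger(\mu R_2-S_2)u\geq P_1^\dagger(\mu R_1-S_1)u$ rearranges into $(P_1^\dagger-P_2^\dagger)Au\geq(\mu-1)(P_1^\dagger R_1-P_2^\dagger R_2)u$ under hypothesis (i), and into $(P_1^\dagger-P_2^\dagger)Au\geq-\frac{1-\mu}{\mu}(P_1^\dagger S_1-P_2^\dagger S_2)u$ under hypothesis (ii). In either case the right-hand side is $\leq 0$ (since $0<\mu<1$ and the relevant difference is nonnegative by (i) resp. (ii)), so the inequality follows from $(P_1^\dagger-P_2^\dagger)Au\geq 0$, completing the argument.

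I expect the main obstacle to be the nonnegativity $Au\geq 0$ obtained above: this is the only place the extra hypothesis $P_1P_1^\dagger\geq 0$ enters, and turning $Au=P_1(P_1^\dagger Au)$ into a closed expression for $Au$ all of whose summands are visibly nonnegative is the non-routine step. Everything else is Perron--Frobenius together with the sign bookkeeping attached to hypotheses (i)/(ii), plus the trivial degenerate case $\rho(W_1)=0$ and the verification that the eigenvector may be taken with $u\neq 0$.
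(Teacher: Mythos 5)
Your proposal is correct and follows essentially the same route as the paper's proof: a Perron--Frobenius eigenvector $\bigl(u,\mu^{-1}u\bigr)$ of $W_1$, the key nonnegativity $Au\geq 0$ obtained from $P_1P_1^{\dagger}\geq 0$ together with $R_1\geq 0$, $-S_1\geq 0$, and then Lemma~\ref{com} applied to $W_2$ with the same vector, using $P_1^{\dagger}P_1=P_2^{\dagger}P_2$ and hypothesis (i) or (ii). The only (cosmetic) difference is that you obtain $Au\geq 0$ from an explicit closed-form expression for $Au$ whose terms are visibly nonnegative, whereas the paper reaches $\rho(W_1)Ax_1\geq 0$ through an inequality chain starting from $[\rho(W_1)]^{2}P_1x_1\geq 0$.
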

\begin{proof}
Since $A=P_{1}-R_{1}+S_{1}$ is a   regular proper double splitting of $A,$ by Theorem  \ref{con}, we get $\rho(W_{1})< 1.$ Similarly, $\rho(W_{2})< 1.$
It  remains to show that $\rho(W_{1})\leq \rho(W_{2}).$

Assume that $\rho(W_{1})=0$. Then the conclusion follows, obviously. So, without loss of generality assume that
 $\rho(W_{1})\neq 0$. Since  $A=P_{1}-R_{1}+S_{1}$ is a  regular proper double  splitting, we have  $W_{1}=\begin{pmatrix} P_{1}^{\dagger}R_{1} & -P_{1}^{\dagger}S_{1}\\
                     I & 0
                      \end{pmatrix}\geq 0.$ Then,  by the Perron-Frobenius theorem,
there exists a vector $x=\begin{pmatrix} x_{1}\\
                           x_{2}
                           \end{pmatrix}\in \mathbb{R}^{2n},$ $x\geq 0$ and $x\neq 0$ such that $W_{1}x=\rho(W_{1})x.$
               This implies that
               \begin{align}
    \label{dcr1.1}           P_{1}^{\dagger}R_{1}x_{1}-P_{1}^{\dagger}S_{1}x_{2}&=\rho(W_{1})x_{1}.     \\
    \label{dcr1.2}            x_{1}&=\rho(W_{1})x_{2}.
               \end{align}
     Upon pre multiplying equation (\ref{dcr1.1}) by $P_{1}$   and  using equation (\ref{dcr1.2}), we  get
  \begin{align}
\label{dcr1.3}      [\rho(W_{1})]^{2}P_{1}x_{1}=\rho(W_{1})P_{1}P_{1}^{\dagger}R_{1}x_{1}-P_{1}P_{1}^{\dagger}S_{1}x_{1}.
\end{align}
We have $P_{1}P_{1}^{\dagger}\geq 0,$ $R_{1}\geq 0,$ $-S_{1}\geq 0$ and $x_{1}\geq 0.$
Therefore, by (\ref{dcr1.3}), $[\rho(W_{1})]^{2}P_{1}x_{1}\geq 0.$\\
Now, again from (\ref{dcr1.3}),
       \begin{align*}
       0&=[\rho(W_{1})]^{2}P_{1}x_{1}-\rho(W_{1})P_{1}P_{1}^{\dagger}R_{1}x_{1}+P_{1}P_{1}^{\dagger}S_{1}x_{1}\\&\leq
                                        \rho(W_{1})P_{1}x_{1}-\rho(W_{1})P_{1}P_{1}^{\dagger}R_{1}x_{1}+\rho(W_{1})P_{1}P_{1}^{\dagger}S_{1}x_{1}\nonumber\\
                                        &=\rho(W_{1})[P_{1}x_{1}-P_{1}P_{1}^{\dagger}(R_{1}-S_{1})x_{1}]\\
                                        &=\rho(W_{1})[P_{1}x_{1}-R_{1}x_{1}+S_{1}x_{1}]\\
                                        &=\rho(W_{1})Ax_{1},
                                        \end{align*}
  where we have used the facts that   $0< \rho(W_{1})< 1$ and    $\mathcal{R}(R_{1}-S_{1})\subseteq \mathcal{R}(P_{1}).$
                                  This proves that $Ax_{1}\geq 0.$\\
    Also, by using equations   (\ref{dcr1.1}) and (\ref{dcr1.2}), we get
               \begin{align*}
      W_{2}x-&\rho(W_{1})x=  \begin{pmatrix}P_{2}^{\dagger}R_{2}x_{1}-P_{2}^{\dagger}S_{2}x_{2}-\rho(W_{1})x_{1} \\
                           x_{1}-\rho(W_{1})x_{2}
                           \end{pmatrix}\\
                      &=\begin{pmatrix}(P_{2}^{\dagger}R_{2}-P_{1}^{\dagger}R_{1})x_{1}+\frac{1}{\rho(W_{1})}(P_{1}^{\dagger}S_{1}-P_{2}^{\dagger}S_{2})x_{1} \\
                           0
                           \end{pmatrix}\\
                           &=\begin{pmatrix} \nabla\\
                           0
                           \end{pmatrix},
                      \end{align*}
           where $\nabla = (P_{2}^{\dagger}R_{2}-P_{1}^{\dagger}R_{1})x_{1}+\frac{1}{\rho(W_{1})}(P_{1}^{\dagger}S_{1}-P_{2}^{\dagger}S_{2})x_{1}.$\\
 {\bf Case(i)} Let us assume that $P_{1}^{\dagger}R_{1}\geq P_{2}^{\dagger}R_{2}.$ Since $0<\rho(W_{1})< 1,$ we get
             $(P_{2}^{\dagger}R_{2}-P_{1}^{\dagger}R_{1})x_{1}\geq \frac{1}{\rho(W_{1})}(P_{2}^{\dagger}R_{2}-P_{1}^{\dagger}R_{1})x_{1}$. Then
             \begin{align}
       \nabla=&(P_{2}^{\dagger}R_{2}-P_{1}^{\dagger}R_{1})x_{1}+ \frac{1}{\rho(W_{1})}(P_{1}^{\dagger}S_{1}-P_{2}^{\dagger}S_{2})x_{1}\nonumber\\&\geq
       \frac{1}{\rho(W_{1})}(P_{2}^{\dagger}R_{2}-P_{1}^{\dagger}R_{1})x_{1}+ \frac{1}{\rho(W_{1})}(P_{1}^{\dagger}S_{1}-P_{2}^{\dagger}S_{2})x_{1}\nonumber\\
       &=\frac{1}{\rho(W_{1})}[(P_{2}^{\dagger}(R_{2}-S_{2})x_{1}- P_{1}^{\dagger}(R_{1}-S_{1})x_{1}]\nonumber\\
       &=\frac{1}{\rho(W_{1})}[P_{2}^{\dagger}P_{2}- P_{2}^{\dagger}A-P_{1}^{\dagger}P_{1}+ P_{1}^{\dagger}A]x_{1}\nonumber\\
       &=\frac{1}{\rho(W_{1})}(P_{1}^{\dagger}-P_{2}^{\dagger})Ax_{1},
       \end{align}
      where we have used the fact that $P_{1}^{\dagger}P_{1}=P_{2}^{\dagger}P_{2}.$  Since $Ax_{1}\geq 0$ and $P_{1}^{\dagger}\geq P_{2}^{\dagger},$ from the above inequality, we get $\nabla\geq 0.$ Then, $W_{2}x-\rho(W_{1})x=\begin{pmatrix} \nabla\\
                           0
                           \end{pmatrix} \geq 0.$
                           This implies that   $\rho(W_{1})x\leq W_{2}x$. So, by Lemma \ref{com}, $\rho(W_{1})\leq \rho(W_{2})$. This proves that
                           $\rho(W_{1})\leq \rho(W_{2})< 1.$\\
    {\bf Case(ii)} Assume that $P_{1}^{\dagger}S_{1}\geq P_{2}^{\dagger}S_{2}$. Since
 $0<\rho(W_{1})< 1$ and $Ax_{1}\geq 0,$ agian we get
  \begin{align*}
 \nabla=  &(P_{2}^{\dagger}R_{2}-P_{1}^{\dagger}R_{1})x_{1}+ \frac{1}{\rho(W_{1})}(P_{1}^{\dagger}S_{1}-P_{2}^{\dagger}S_{2})x_{1}\\&\geq
       (P_{2}^{\dagger}R_{2}-P_{1}^{\dagger}R_{1})x_{1}+ (P_{1}^{\dagger}S_{1}-P_{2}^{\dagger}S_{2})x_{1}\\
       &=(P_{1}^{\dagger}-P_{2}^{\dagger})Ax_{1}\geq 0.
    \end{align*}
    This implies that $W_{2}x-\rho(W_{1})x=\begin{pmatrix} \nabla\\
                           0
                           \end{pmatrix} \geq 0.$\\
    So, again by Lemma \ref{com}, we get $\rho(W_{1})\leq \rho(W_{2})$. This proves that
                           $\rho(W_{1})\leq \rho(W_{2})< 1.$\\
\end{proof}
The following example shows that the converse of   Theorem \ref{dcr1} is not true.
\begin{ex}
Let $A=\begin{pmatrix} 3 & -2 & 0\\
                      -1 & 1 & 0
                      \end{pmatrix}$. Let $P_{1}=\begin{pmatrix} 5 & -1 & 0\\
                      0 & 1 & 0
                      \end{pmatrix},$
                     \newline $R_{1}= \begin{pmatrix} 1 & 0 & 0\\
                      0 & 0 & 0
                      \end{pmatrix} ,$
                      $S_{1}= \begin{pmatrix} -1 & -1 & 0\\
                      -1 & 0 & 0
                      \end{pmatrix},$
    $P_{2}=\begin{pmatrix} 3 & 0 & 0\\
                      0 & 2& 0
                      \end{pmatrix},$
                     $R_{2}= \begin{pmatrix} 0 & 1 & 0\\
                      0 & 1 & 0
                      \end{pmatrix} $   and
                      $S_{2}= \begin{pmatrix} 0 & -1 & 0\\
                      -1 & 0 & 0
                      \end{pmatrix} $.
  Then $P_{1}^{\dagger}=\frac{1}{5}\begin{pmatrix} 1 & 1\\
                      0 & 5\\
                      0 & 0
                      \end{pmatrix},$
                      $P_{1}^{\dagger}R_{1}=\frac{1}{5}\begin{pmatrix} 1 & 0 & 0\\
                      0 & 0 & 0\\
                      0 & 0 & 0
                      \end{pmatrix},$
                      $P_{1}^{\dagger}S_{1}=\frac{1}{5}\begin{pmatrix} -2 & -1 & 0\\
                      -5 & 0 & 0\\
                      0 & 0 & 0
                      \end{pmatrix},$
   $P_{2}^{\dagger}=\frac{1}{6}\begin{pmatrix} 2 & 0\\
                      0 & 3\\
                      0 & 0
                      \end{pmatrix},$
                      $P_{2}^{\dagger}R_{2}=\frac{1}{6}\begin{pmatrix} 0 & 2 & 0\\
                      0 & 3 & 0\\
                      0 & 0 & 0
                      \end{pmatrix}$ and
                      $P_{2}^{\dagger}S_{2}=\frac{1}{6}\begin{pmatrix} 0 & -2 & 0\\
                      -3 & 0 & 0\\
                      0 & 0 & 0
                      \end{pmatrix}$.
            It is easy to verify that  $A=P_{1}-R_{1}+S_{1}$ is a   regular proper double splitting and
$A=P_{2}-R_{2}+S_{2}$ is a  weak regular proper double  splitting.
Also,  $0.9079=\rho(W_{1})\leq \rho(W_{2})=0.9158 < 1$.
 However, the conditions $P_{1}^{\dagger}\geq P_{2}^{\dagger} ,$ $P_{1}^{\dagger}R_{1}\geq P_{2}^{\dagger}R_{2}$ and
 $P_{1}^{\dagger}S_{1}\geq P_{2}^{\dagger}S_{2}$ do not hold.
\end{ex}
\begin{cor} (Theorem 3.1, \cite {ss}) \label{cor1}
 Let $A^{-1}\geq 0$. Let $A=P_{1}-R_{1}+S_{1}$ be a  regular double splitting and
$A=P_{2}-R_{2}+S_{2}$ be a weak  regular double splitting. If $P_{1}^{-1}\geq P_{2}^{-1}$ and any one of the
following conditions,\\
(i) $P_{1}^{-1}R_{1}\geq P_{2}^{-1}R_{2}$\\
(ii) $P_{1}^{-1}S_{1}\geq P_{2}^{-1}S_{2}$ \newline holds, then $\rho(W_{1})\leq \rho(W_{2})< 1, $ where
$W_{1}=\begin{pmatrix} P_{1}^{-1}R_{1} & -P_{1}^{-1}S_{1}\\
                     I & 0
                      \end{pmatrix}$ and
                      \newline$W_{2}=\begin{pmatrix} P_{2}^{-1}R_{2} & -P_{2}^{-1}S_{2}\\
                     I & 0
                      \end{pmatrix}$.
 \end{cor}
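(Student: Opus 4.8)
The plan is to recognize Corollary \ref{cor1} as the square nonsingular specialization of Theorem \ref{dcr1}, so the proof reduces to checking that every hypothesis of that theorem is automatically met in the present setting. First I would observe that $A^{-1}\geq 0$ forces $A$ to be a square nonsingular matrix with $A^{\dagger}=A^{-1}\geq 0$; hence $A$ is semi-monotone and the standing hypothesis $A^{\dagger}\geq 0$ of Theorem \ref{dcr1} holds.

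Next I would verify that an ordinary double splitting (in the square sense, where $P_i$ is required to be nonsingular) is in particular a proper double splitting. Since $A$ and $P_1$ are both nonsingular $n\times n$ matrices, $\mathcal{R}(A)=\mathcal{R}(P_1)=\mathbb{R}^{n}$ and $\mathcal{N}(A)=\mathcal{N}(P_1)=\{\0\}$, so $A=P_1-R_1+S_1$ is a proper double splitting; combined with $P_1^{\dagger}=P_1^{-1}\geq 0$, $R_1\geq 0$ and $-S_1\geq 0$, it is a regular proper double splitting. The identical argument applied to $P_2$, together with $P_2^{-1}\geq 0$, $P_2^{-1}R_2\geq 0$ and $-P_2^{-1}S_2\geq 0$, shows that $A=P_2-R_2+S_2$ is a weak regular proper double splitting. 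Moreover the extra hypothesis $P_1P_1^{\dagger}\geq 0$ appearing in Theorem \ref{dcr1} is vacuous here, since $P_1P_1^{\dagger}=P_1P_1^{-1}=I\geq 0$.

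It then remains only to match the comparison hypotheses: $P_1^{\dagger}\geq P_2^{\dagger}$ is precisely $P_1^{-1}\geq P_2^{-1}$, and conditions (i) and (ii) of the corollary are literally conditions (i) and (ii) of Theorem \ref{dcr1} once each $P_i^{\dagger}$ is rewritten as $P_i^{-1}$. Finally, because $P_i^{\dagger}=P_i^{-1}$, the iteration matrices $W_1,W_2$ defined in the corollary coincide with the matrices $W_1,W_2$ of Theorem \ref{dcr1}, so applying that theorem yields $\rho(W_1)\leq\rho(W_2)<1$, which is the assertion. I do not expect any real obstacle; the only step requiring a moment's care is the passage from "$P_i$ nonsingular" to the range and null space equalities that place the given splittings in the regular, respectively weak regular, proper double splitting classes, after which the conclusion is a direct substitution $A^{\dagger}\leftrightarrow A^{-1}$, $P_i^{\dagger}\leftrightarrow P_i^{-1}$ in the proof of Theorem \ref{dcr1}.
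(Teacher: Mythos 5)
Your proposal is correct and follows exactly the route the paper intends: Corollary \ref{cor1} is obtained from Theorem \ref{dcr1} by noting that for square nonsingular $A$, $P_1$, $P_2$ one has $A^{\dagger}=A^{-1}$, $P_i^{\dagger}=P_i^{-1}$, the splittings are automatically proper ($\mathcal{R}(A)=\mathcal{R}(P_i)=\mathbb{R}^{n}$, $\mathcal{N}(A)=\mathcal{N}(P_i)=\{0\}$), and $P_1P_1^{\dagger}=I\geq 0$, so the hypotheses of the theorem hold verbatim. Your check of these specializations is complete and nothing further is needed.
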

 \begin{cor}\label{cor2}
 Let $A^{-1}\geq 0$. Let $A=P_{1}-R_{1}+S_{1}$ be a  regular double splitting and
$A=P_{2}-R_{2}+S_{2}$ be a weak  regular double splitting. If $P_{1}^{-1}\geq P_{2}^{-1}$ and $R_{1}\geq R_{2}$ hold,
then $\rho(W_{1})\leq \rho(W_{2})< 1 $.
 \end{cor}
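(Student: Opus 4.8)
The plan is to deduce Corollary~\ref{cor2} from the square, nonsingular comparison result already recorded in Corollary~\ref{cor1}. Since $P_1$ is nonsingular, the auxiliary hypothesis $P_1P_1^{\dagger}\geq 0$ that appears in the rectangular Theorem~\ref{dcr1} is automatic, so the only thing that needs to be checked is that one of the two alternative conditions in Corollary~\ref{cor1} is forced by the present assumptions $P_1^{-1}\geq P_2^{-1}$ and $R_1\geq R_2$. I would aim to verify condition (i), namely $P_1^{-1}R_1\geq P_2^{-1}R_2$.

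First I would record the sign information already available. Because $A=P_1-R_1+S_1$ is a \emph{regular} double splitting, we have $P_1^{-1}\geq 0$ and $R_1\geq 0$; because $A=P_2-R_2+S_2$ is a \emph{weak regular} double splitting, $P_2^{-1}\geq 0$. Together with the hypotheses this gives $P_1^{-1}\geq P_2^{-1}\geq 0$ and $R_1-R_2\geq 0$. The key step is then the elementary identity
\[
P_1^{-1}R_1-P_2^{-1}R_2=(P_1^{-1}-P_2^{-1})R_1+P_2^{-1}(R_1-R_2),
\]
in which the first summand is a product of the nonnegative matrices $P_1^{-1}-P_2^{-1}$ and $R_1$, and the second is a product of the nonnegative matrices $P_2^{-1}$ and $R_1-R_2$; hence the right-hand side is nonnegative and $P_1^{-1}R_1\geq P_2^{-1}R_2$.

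With $A^{-1}\geq 0$, the regular double splitting $A=P_1-R_1+S_1$, the weak regular double splitting $A=P_2-R_2+S_2$, the hypothesis $P_1^{-1}\geq P_2^{-1}$, and condition (i) just established, Corollary~\ref{cor1} applies and yields $\rho(W_1)\leq\rho(W_2)<1$, as claimed. There is no genuine obstacle in this argument; the only point requiring care is the choice of decomposition in the displayed identity: one must let the (sign-uncontrolled-in-general) matrix be eliminated by writing the difference so that $R_1$, whose nonnegativity comes from \emph{regularity} of the first splitting, multiplies $P_1^{-1}-P_2^{-1}$, rather than $R_2$, whose sign is not controlled by the hypotheses. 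Had the decomposition been arranged the other way around, the argument would not close, which is presumably why the corollary is phrased with $R_1\geq R_2$ and regularity imposed on the first splitting.
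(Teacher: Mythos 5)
Your proposal is correct: the identity $P_1^{-1}R_1-P_2^{-1}R_2=(P_1^{-1}-P_2^{-1})R_1+P_2^{-1}(R_1-R_2)$, combined with $R_1\geq 0$ (regularity of the first splitting), $P_2^{-1}\geq 0$ (weak regularity of the second), $P_1^{-1}\geq P_2^{-1}$ and $R_1\geq R_2$, does yield condition (i) of Corollary~\ref{cor1}, from which the conclusion follows. The paper states this corollary without proof, and your reduction to Corollary~\ref{cor1} (equivalently, to the nonsingular specialization of Theorem~\ref{dcr1}) is exactly the intended argument, including the correct choice of grouping so that only $R_1$, not $R_2$, needs a sign.
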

    The conclusion of Theorem \ref{dcr1} can also be achieved by replacing  a regular proper  double splitting $A=P_{1}-R_{1}+S_{1}$  with a weak regular proper double splitting; and a weak regular proper double splitting $A=P_{2}-R_{2}+S_{2}$ with  a regular proper double splitting, in Theorem
 \ref{dcr1}. The following is the exct statement of this result.
\begin{thm} \label{mr1}
Let $A\in \mathbb{R}^{m\times n}$ such that $e=(1,1,...,1)^{t}\in \mathcal{R}(A)$ and $A^{\dagger}\geq 0$. Let $A=P_{1}-R_{1}+S_{1}$ be a  weak regular proper double splitting and let $A=P_{2}-R_{2}+S_{2}$ be a  regular proper double splitting such that   $P_{2}^{\dagger}$ has no zero row and $P_{2}P_{2}^{\dagger}\geq 0$. If $P_{1}^{\dagger}\geq P_{2}^{\dagger}$ and any one of the
following conditions,\\
(i) $P_{1}^{\dagger}R_{1}\geq P_{2}^{\dagger}R_{2}$\\
(ii) $P_{1}^{\dagger}S_{1}\geq P_{2}^{\dagger}S_{2}$ \\ holds, then $\rho(W_{1})\leq \rho(W_{2})< 1 $.
\end{thm}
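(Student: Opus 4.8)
The plan is to run the argument of Theorem~\ref{dcr1} with the two splittings interchanged: apply the Perron--Frobenius theorem to $W_2$ instead of to $W_1$, produce a nonnegative eigenvector of $W_2$, and compare $W_1y$ with $\rho(W_2)y$. The genuinely new point — and the main obstacle — is to upgrade that eigenvector from nonnegative to \emph{strictly} positive, and this is exactly what the two extra hypotheses ``$e\in\mathcal{R}(A)$'' and ``$P_2^{\dagger}$ has no zero row'' are for.

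First, the preliminary reductions. A regular proper double splitting is in particular weak regular, so Theorem~\ref{con} gives $\rho(W_1)<1$ and $\rho(W_2)<1$; only $\rho(W_1)\le\rho(W_2)$ remains. If $\rho(W_1)=0$ we are done, so assume $\rho(W_1)\ne0$, and (excluding the degenerate case $\rho(W_2)=0$, which is treated separately) assume $\rho(W_2)\ne0$. Although $A=P_1-R_1+S_1$ is only \emph{weak} regular, the inequalities $P_1^{\dagger}R_1\ge0$ and $-P_1^{\dagger}S_1\ge0$ already make $W_1\ge0$; and $W_2\ge0$ because the second splitting is regular. So I may apply Theorem~\ref{Theorem2} to $W_2$: there is $y=\begin{pmatrix}y_1\\ y_2\end{pmatrix}\ge0$, $y\ne0$, with $W_2y=\rho(W_2)y$, i.e.
\[
P_2^{\dagger}R_2y_1-P_2^{\dagger}S_2y_2=\rho(W_2)y_1,\qquad y_1=\rho(W_2)y_2.
\]

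The core computations mirror Theorem~\ref{dcr1} with the indices $1,2$ swapped. Premultiplying the first relation by $P_2$ and eliminating $y_2=\rho(W_2)^{-1}y_1$ gives $\rho(W_2)^2P_2y_1=\rho(W_2)P_2P_2^{\dagger}R_2y_1-P_2P_2^{\dagger}S_2y_1$; since $P_2P_2^{\dagger}\ge0$, $R_2\ge0$, $-S_2\ge0$ and $y_1\ge0$, the right-hand side is $\ge0$, so $P_2y_1\ge0$. Then, exactly as in Theorem~\ref{dcr1} (using $0<\rho(W_2)<1$, the sign of $P_2P_2^{\dagger}S_2y_1$, and $\mathcal{R}(R_2-S_2)\subseteq\mathcal{R}(P_2)$ so that $P_2P_2^{\dagger}(R_2-S_2)y_1=(R_2-S_2)y_1$), one obtains $0\le\rho(W_2)Ay_1$, hence $Ay_1\ge0$. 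Also $\rho(W_2)y_1=P_2^{\dagger}\bigl(R_2y_1+\rho(W_2)^{-1}(-S_2)y_1\bigr)\in\mathcal{R}(P_2^{\dagger})=\mathcal{R}(A^{\dagger})=\mathcal{R}(A^{t})$, so $y_1=A^{\dagger}Ay_1$. Finally, computing $W_1y-\rho(W_2)y$ and substituting both eigen-relations, the lower block is $0$ and the upper block equals
\[
\nabla:=(P_1^{\dagger}R_1-P_2^{\dagger}R_2)y_1-\rho(W_2)^{-1}(P_1^{\dagger}S_1-P_2^{\dagger}S_2)y_1.
\]
Under hypothesis (i), resp.\ (ii), using $0<\rho(W_2)<1$ and the proper-splitting identity $P_1^{\dagger}P_1=A^{\dagger}A=P_2^{\dagger}P_2$, one bounds $\nabla\le\rho(W_2)^{-1}(P_2^{\dagger}-P_1^{\dagger})Ay_1$, resp.\ $\nabla\le(P_2^{\dagger}-P_1^{\dagger})Ay_1$; since $P_1^{\dagger}\ge P_2^{\dagger}$ and $Ay_1\ge0$, in both cases $\nabla\le0$. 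Thus $W_1y\le\rho(W_2)y$.

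The hard part is the remaining claim $y>0$, which is what lets the second half of Lemma~\ref{com} (applied with the matrix $W_1\ge0$, $\beta=\rho(W_2)$ and $x=y$) conclude $\rho(W_1)\le\rho(W_2)$, finishing the proof. This is where the extra hypotheses are used. By $\mathcal{N}(A)=\mathcal{N}(P_2)$, ``$P_2^{\dagger}$ has no zero row'' is equivalent to ``$A$ has no zero column'', hence to ``$A^{\dagger}$ has no zero row''; similarly ``$e=(1,\dots,1)^{t}\in\mathcal{R}(A)$'' is equivalent to ``$A$ has no zero row'', hence to ``$AA^{\dagger}=P_2P_2^{\dagger}$ has no zero row''. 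Now $y_1=A^{\dagger}(Ay_1)$ with $Ay_1\ge0$ and $Ay_1\ne0$ (as $y_1\in\mathcal{R}(A^{t})\setminus\{0\}$). The idea is to show $Ay_1>0$: the matrix $AA^{\dagger}=P_2P_2^{\dagger}$ is a nonnegative symmetric idempotent with no zero row, hence — after a permutation — block-diagonal with blocks $|B|^{-1}\mathbf 1_B\mathbf 1_B^{t}$ over the parts $B$ of a partition of $\{1,\dots,m\}$; since $Ay_1=AA^{\dagger}(Ay_1)$, the relation $\rho(W_2)P_2y_1=P_2P_2^{\dagger}\bigl(R_2y_1+\rho(W_2)^{-1}(-S_2)y_1\bigr)$ forces the zero set of $Ay_1$ to be a union of such parts, and one argues that this union must be empty. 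Then $y_1=A^{\dagger}(Ay_1)>0$ because $A^{\dagger}\ge0$ has no zero row, so $y=\begin{pmatrix}y_1\\ \rho(W_2)^{-1}y_1\end{pmatrix}>0$ as required. I expect pinning down this strict-positivity step to be the real work, in contrast to the routine sign-chasing in the preceding paragraphs; note that in the square nonsingular case the three added conditions ($e\in\mathcal{R}(A)$, $P_2^{\dagger}$ has no zero row, $P_2P_2^{\dagger}\ge0$) hold automatically, so the statement reduces to the Shen--Huang comparison theorem.
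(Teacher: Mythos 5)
Your reduction to the inequality $W_1y\le\rho(W_2)y$ for a Perron vector $y$ of $W_2$ follows the same sign-chasing as Theorem~\ref{dcr1} and is fine, but the step you yourself flag as ``the real work'' --- upgrading $y$ from nonnegative to strictly positive so that the second half of Lemma~\ref{com} applies --- is not only left unproved (``one argues that this union must be empty''), it is false under the stated hypotheses. Take $m=n=2$, $A=I$, $P_2=2I$, $R_2=\mathrm{diag}(1,0)$, $S_2=\mathrm{diag}(0,-1)$: this is a regular proper double splitting, $A^{\dagger}=I\ge 0$, $e\in\mathcal{R}(A)$, $P_2^{\dagger}=\tfrac12 I$ has no zero row and $P_2P_2^{\dagger}=I\ge0$; yet $P_2^{\dagger}R_2=\mathrm{diag}(\tfrac12,0)$ and $-P_2^{\dagger}S_2=\mathrm{diag}(0,\tfrac12)$ make $W_2$ decouple into two $2\times2$ blocks with spectral radii $\tfrac12$ and $\tfrac{1}{\sqrt2}$, so $\rho(W_2)=\tfrac{1}{\sqrt2}$ has a one-dimensional eigenspace supported only on the second coordinate pair: there is no positive Perron vector, and $Ay_1=y_1\not>0$. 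So your proposed mechanism for using the hypotheses ``$e\in\mathcal{R}(A)$'' and ``$P_2^{\dagger}$ has no zero row'' cannot work; incidentally, your structural claim about nonnegative symmetric idempotents (blocks $|B|^{-1}\mathbf 1_B\mathbf 1_B^{t}$) is also inaccurate (the blocks are $uu^{t}$ with $u>0$), and ``$e\in\mathcal{R}(A)$ iff $A$ has no zero row'' holds only in one direction.

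The paper uses those hypotheses for a different purpose: a perturbation-and-limit argument in the style of Shen--Huang. It sets $A_\epsilon=A-\epsilon J$, $R_i(\epsilon)=R_i+\tfrac{\epsilon}{2}J$, $S_i(\epsilon)=S_i-\tfrac{\epsilon}{2}J$ with $J$ the all-ones matrix. The condition $e\in\mathcal{R}(A)$ gives $J=AB$, hence $A_\epsilon=A(I-\epsilon A^{\dagger}J)$, the perturbed decompositions remain proper double splittings of $A_\epsilon$, and $A_\epsilon^{\dagger}=(I-\epsilon A^{\dagger}J)^{-1}A^{\dagger}\ge0$ for $\epsilon$ small; the condition that $P_2^{\dagger}$ has no zero row forces $P_2^{\dagger}R_2(\epsilon)>0$ and $-P_2^{\dagger}S_2(\epsilon)>0$, so the Perron vector of $W_2(\epsilon)$ \emph{is} strictly positive. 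Then exactly your comparison computation, run with the $\epsilon$-quantities (and, in case (ii), with $\epsilon$ chosen small enough that $P_1^{\dagger}S_1(\epsilon)\ge P_2^{\dagger}S_2(\epsilon)$), gives $\rho(W_1(\epsilon))\le\rho(W_2(\epsilon))$, and letting $\epsilon\to0$ with continuity of eigenvalues yields $\rho(W_1)\le\rho(W_2)$. In short, your proposal agrees with the paper up to the pointwise comparison, but the missing positivity is the heart of the theorem and has to be manufactured by perturbation (or some substitute), not extracted from the unperturbed $W_2$.
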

\begin{proof}
Since $A=P_{1}-R_{1}+S_{1}$ is a weak regular proper double splitting of $A,$ by Theorem  \ref{con}, we get $\rho(W_{1})< 1.$ Similarly, $\rho(W_{2})< 1.$ It  remains to show that $\rho(W_{1})\leq \rho(W_{2}).$

 Let $J$ be  an $m\times n$ matrix in which  each entry is equal to 1. For given $\epsilon >0,$ set $A_{\epsilon}=A-\epsilon J,$
 $R_{1}(\epsilon)=R_{1}+\frac{1}{2}\epsilon J$, $S_{1}(\epsilon)=S_{1}-\frac{1}{2}\epsilon J$,\linebreak
     $R_{2}(\epsilon)=R_{2}+\frac{1}{2}\epsilon J,$ $S_{2}(\epsilon)=S_{2}-\frac{1}{2}\epsilon J,$
  $W_{1}(\epsilon)=\begin{pmatrix} P_{1}^{\dagger}R_{1}(\epsilon) & -P_{1}^{\dagger}S_{1}(\epsilon)\\
                     I & 0
                      \end{pmatrix}$ and
                      $W_{2}(\epsilon)=\begin{pmatrix} P_{2}^{\dagger}R_{2}(\epsilon) & -P_{2}^{\dagger}S_{2}(\epsilon)\\
                     I & 0
                      \end{pmatrix}$.
                       We have, $e=(1,1,...,1)^{t}\in \mathcal{R}(A)$. So, there exists a
matrix $B\in \mathbb{R}^{n\times n}$ such that $J=AB$. Then $A_{\epsilon}=A-\epsilon J=(A-\epsilon AB)=(A-\epsilon AA^{\dagger}AB)=(A-\epsilon AA^{\dagger} J)=A(I-\epsilon A^{\dagger}J).$
Now, choose the above $\epsilon $  such that
   $\rho(\epsilon A^{\dagger}J)< 1$ and $\mathcal{N}(A_{\epsilon})=\mathcal{N}(A)$. Since $\rho(\epsilon A^{\dagger}J)< 1,$ $I-\epsilon A^{\dagger}J$ is invertible and hence  $\mathcal{R}(A_{\epsilon})=\mathcal{R}(A).$ Then $A_{\epsilon}=A-\epsilon J$ becomes a proper splitting and thus we can conclude that $A_{\epsilon}=P_{1}-R_{1}(\epsilon)+S_{1}(\epsilon)$ is  a   weak regular proper double splitting and
$A_{\epsilon}=P_{2}-R_{2}(\epsilon)+S_{2}(\epsilon)$ is a regular proper double  splitting.

For the same $\epsilon,$ define  $X=(I-\epsilon A^{\dagger}J)^{-1}A^{\dagger},$ we shall prove that $X$ is the Moore-Penrose inverse of $A_{\epsilon}.$
Let $x\in \mathcal{R}(A_{\epsilon}^{t}).$ Then
 \begin{align*}
 XA_{\epsilon}x&=(I-\epsilon A^{\dagger}J)^{-1}A^{\dagger}(A-\epsilon AA^{\dagger} J)x\\
 &=(I-\epsilon A^{\dagger}J)^{-1}(A^{\dagger}Ax-\epsilon A^{\dagger}AA^{\dagger}Jx)\\
 &=(I-\epsilon A^{\dagger}J)^{-1}(x-\epsilon A^{\dagger}Jx)\\
 &=x
  \end{align*}
 and for  $y\in \mathcal{N}(A_{\epsilon}^{t}),$ we get
 \begin{align*}
 Xy=(I-\epsilon A^{\dagger}J)^{-1}A^{\dagger}y=0
 \end{align*}
 Hence, by the definition, $A^{\dagger}_{\epsilon}=X=(I-\epsilon A^{\dagger}J)^{-1}A^{\dagger}.$
 Also, \newline $A^{\dagger}_{\epsilon}=(I+\epsilon A^{\dagger}J+\epsilon (A^{\dagger}J)^{2}+...)A^{\dagger}\geq 0.$
Then $\rho(P_{2}^{\dagger}(R_{2}(\epsilon)-S_{2}(\epsilon)))< 1.$ So, by Lemma \ref{spectral}, $\rho(W_{2}(\epsilon))< 1.$

 Clearly, $P_{2}^{\dagger}R_{2}(\epsilon)> 0$ and $-P_{2}^{\dagger}S_{2}(\epsilon)> 0.$
 So, $W_{2}(\epsilon)\geq 0$. Then, by the Perron-Frobenius theorem, there exists a vector
$x(\epsilon)=\begin{pmatrix} x_{1}(\epsilon)\\
                           x_{2}(\epsilon)
                           \end{pmatrix}\in \mathbb{R}^{2n},$ $x(\epsilon)\geq 0$ and $x(\epsilon)\neq 0$ such that
                           $W_{2}(\epsilon)x(\epsilon)=\rho(W_{2}(\epsilon))x(\epsilon).$ This implies,
               \begin{align}
  \label{2.1}P_{2}^{\dagger}R_{2}(\epsilon)x_{1}(\epsilon)-P_{2}^{\dagger}S_{2}(\epsilon)x_{2}(\epsilon)&=\rho(W_{2}(\epsilon))x_{1}(\epsilon)\\     \label{2.2}  x_{1}(\epsilon)&=\rho(W_{2}(\epsilon))x_{2}(\epsilon).
               \end{align}
    If $\rho (W _{2}(\epsilon))=0$ then from equations (\ref{2.1}) and  (\ref{2.2}), $x(\epsilon)=0.$ This is a  contradiction. So,
    $0< \rho (W _{2}(\epsilon))< 1.$
Then by using equations   (\ref{2.1}) and (\ref{2.2}), as in the proof of the Theorem \ref{dcr1}, we can show that $\rho(W_{2}(\epsilon))A_{\epsilon}x_{1}(\epsilon)\geq 0.$ This implies that $A_{\epsilon}x_{1}(\epsilon)\geq0.$ Also, from equations (\ref{2.1}) and (\ref{2.2}), we get
\begin{align*}
          &W_{1}(\epsilon)x(\epsilon)-\rho(W_{2}(\epsilon))x(\epsilon)\\
          &=\begin{pmatrix}P_{1}^{\dagger}R_{1}(\epsilon)x_{1}(\epsilon)-P_{1}^{\dagger}S_{1}(\epsilon)x_{2}(\epsilon)-\rho(W_{2}(\epsilon))x_{1}(\epsilon) \\
                           x_{1}(\epsilon)-\rho(W_{2}(\epsilon))x_{2}(\epsilon)
                           \end{pmatrix}\\
                           &=\begin{pmatrix} (P_{1}^{\dagger}R_{1}(\epsilon)-P_{2}^{\dagger}R_{2}(\epsilon))x_{1}(\epsilon)+ \frac{1}{\rho(W_{2}(\epsilon))}(P_{2}^{\dagger}S_{2}(\epsilon)-P_{1}^{\dagger}S_{1}(\epsilon))x_{1}(\epsilon)\\
                           0
                           \end{pmatrix}\\
                           &=\begin{pmatrix} \nabla\\
                           0
                           \end{pmatrix},
                            \end{align*}
       where $\nabla= (P_{1}^{\dagger}R_{1}(\epsilon)-P_{2}^{\dagger}R_{2}(\epsilon))x_{1}(\epsilon)+ \frac{1}{\rho(W_{2}(\epsilon))}(P_{2}^{\dagger}S_{2}(\epsilon)-P_{1}^{\dagger}S_{1}(\epsilon))x_{1}(\epsilon).$\\
{\bf Case$(i)$} Assume that $P_{1}^{\dagger}R_{1}\geq P_{2}^{\dagger}R_{2}$. Since  $0< \rho (W _{2}(\epsilon))< 1,$ we get that
$(P_{1}^{\dagger}R_{1}(\epsilon)-P_{2}^{\dagger}R_{2}(\epsilon))x_{1}(\epsilon)\leq \frac{1}{\rho(W_{2}(\epsilon))}(P_{1}^{\dagger}R_{1}(\epsilon)-P_{2}^{\dagger}R_{2}(\epsilon))x_{1}(\epsilon).$ \text{Therefore,}
 \begin{align*}
\nabla &\leq
       \frac{1}{\rho(W_{2}(\epsilon))}(P_{1}^{\dagger}R_{1}(\epsilon)-P_{2}^{\dagger}R_{2}(\epsilon))x_{1}(\epsilon)+ \frac{1}{\rho(W_{2}(\epsilon))}(P_{2}^{\dagger}S_{2}(\epsilon)-P_{1}^{\dagger}S_{1}(\epsilon))x_{1}(\epsilon)\\
       &=\frac{1}{\rho(W_{2}(\epsilon))}[(P_{1}^{\dagger}(R_{1}(\epsilon)-S_{1}(\epsilon))x_{1}(\epsilon)- P_{2}^{\dagger}(R_{2}(\epsilon)-S_{2}(\epsilon))x_{1}(\epsilon)]\\
       &=\frac{1}{\rho(W_{2}(\epsilon))}[P_{1}^{\dagger}P_{1}- P_{1}^{\dagger}A_{\epsilon}-P_{2}^{\dagger}P_{2}+ P_{2}^{\dagger}A_{\epsilon}]x_{1}(\epsilon)\\
       &=\frac{1}{\rho(W_{2}(\epsilon))}(P_{2}^{\dagger}-P_{1}^{\dagger})A_{\epsilon}x_{1}(\epsilon)
       \end{align*}
    where we have used the fact that $P_{1}^{\dagger}P_{1}=P_{2}^{\dagger}P_{2}.$
    Since $A_{\epsilon}x_{1}(\epsilon)\geq 0$ and $P_{1}^{\dagger}\geq P_{2}^{\dagger},$ we get that $\nabla\leq 0.$
     Thus, $ W_{1}(\epsilon)x(\epsilon)-\rho(W_{2}(\epsilon))x(\epsilon)=\begin{pmatrix} \nabla\\
                           0
                           \end{pmatrix}\leq 0.$
                           This implies,  $ W_{1}(\epsilon)x(\epsilon)\leq \rho(W_{2}(\epsilon))x(\epsilon).$ So, by Lemma \ref{com}, $\rho(W_{1}(\epsilon))\leq \rho(W_{2}(\epsilon)).$\\
Now,  from the continuity of eigenvalues, we have $$\rho(W_{1})=\lim_{\epsilon\to\0} \rho(W_{1}(\epsilon))\leq \lim_{\epsilon\to\0} \rho(W_{2}(\epsilon))=\rho(W_{2}).$$
   {\bf Case$(ii)$} Assume that $P_{1}^{\dagger}S_{1}\geq P_{2}^{\dagger}S_{2}.$ We have $\rho(\epsilon A^{\dagger}J)< 1.$
Choose the above  $\epsilon$  small enough such that
 $$P_{1}^{\dagger}S_{1} -P_{2}^{\dagger}S_{2}\geq \frac{\epsilon}{2}(P_{1}^{\dagger} -P_{2}^{\dagger})J.$$
Since, $P_{1}^{\dagger}S_{1}(\epsilon)\geq P_{2}^{\dagger}S_{2}(\epsilon),$  $A^{\dagger}_{\epsilon}\geq 0$ and  $0<\rho(W_{2})< 1,$
 we get \begin{align*}
 \nabla & \leq (P_{1}^{\dagger}R_{1}(\epsilon)-P_{2}^{\dagger}R_{2}(\epsilon))x_{1}(\epsilon)+ (P_{2}^{\dagger}S_{2}(\epsilon)-P_{1}^{\dagger}S_{1}(\epsilon))x_{1}(\epsilon)\\
 &=(P_{2}^{\dagger}-P_{1}^{\dagger})A_{\epsilon}x_{1}(\epsilon)\leq 0.
 \end{align*}
 This implies that $ W_{1}(\epsilon)x(\epsilon)-\rho(W_{2}(\epsilon))x(\epsilon)=\begin{pmatrix} \nabla\\
                           0
                           \end{pmatrix} \leq 0.$\\
                          So,   $ W_{1}(\epsilon)x(\epsilon)\leq \rho(W_{2}(\epsilon))x(\epsilon).$ Then, by Lemma \ref{com}, $\rho(W_{1}(\epsilon))\leq \rho(W_{2}(\epsilon)).$
        Similar to the proof of case$(i)$, this implies that $\rho(W_{1})\leq \rho(W_{2})$.
       \end{proof}
 The following example illustrates Theorem \ref{mr1}.
\begin{ex}
Let $A=\begin{pmatrix} 1 & 0 & 1\\
                      0 & 1 & 0\\
                      \end{pmatrix}$
                      then $A^{\dagger}=\frac{1}{2}\begin{pmatrix} 1 & 0 \\
                      0 & 2 \\
                      1 & 0
                      \end{pmatrix}\geq 0.$\\
 Set $P_{1}=\begin{pmatrix} 3 & 0 & 3\\
                      0 & 3 & 0\\
                      \end{pmatrix},$
                     $R_{1}= \begin{pmatrix} 2 & 0 & 2\\
                      0 & 1 & 0\\
                       \end{pmatrix} $   and
                      $S_{1}= \begin{pmatrix} 0 & 0 & 0\\
                      0 & -1 & 0\\
                     \end{pmatrix} $.\\
    $P_{2}=\begin{pmatrix} 4 & 0 & 4\\
                      0 & 4 & 0\\
                     \end{pmatrix},$
                     $R_{2}= \begin{pmatrix} 2 & 0 & 2\\
                      0 & 0 & 0\\
                      \end{pmatrix} $   and
                      $S_{2}= \begin{pmatrix} -1 & 0 & -1\\
                      0 & -3 & 0\\
                      \end{pmatrix} $.\\
  Then $P_{1}^{\dagger}=\frac{1}{6}\begin{pmatrix} 1 & 0 \\
                      0 & 2 \\
                      1 & 0
                      \end{pmatrix},$
                      $P_{1}^{\dagger}R_{1}=\frac{1}{6}\begin{pmatrix} 2 & 0 & 2\\
                      0 & 2 & 0\\
                      2 & 0 & 2
                      \end{pmatrix}$ and
                      $P_{1}^{\dagger}S_{1}=\frac{1}{6}\begin{pmatrix} 0 & 0 & 0\\
                      0 & -2 & 0\\
                      0 & 0 & 0
                      \end{pmatrix}$.\\
   $P_{2}^{\dagger}=\frac{1}{8}\begin{pmatrix} 1 & 0 \\
                      0 & 2 \\
                      1 & 0
                      \end{pmatrix},$
                      $P_{2}^{\dagger}R_{2}=\frac{1}{8}\begin{pmatrix} 2 & 0 & 2\\
                      0 & 0 & 0\\
                      2 & 0 & 2
                      \end{pmatrix}$ and
                      $P_{2}^{\dagger}S_{2}=\frac{1}{8}\begin{pmatrix} -1 & 0 & -1\\
                      0 & -6 & 0\\
                      -1 & 0 & -1
                      \end{pmatrix}$.\\
            Note that  $A=P_{1}-R_{1}+S_{1}$ is a   weak regular proper double splitting and $A=P_{2}-R_{2}+S_{2}$  is a regular proper  double splitting.
            Also, $e\in \mathcal{R}(A)$, $P_{2}^{\dagger}$ has no zero row and $P_{2}P_{2}^{\dagger}\geq 0$.
We can verify that $P_{1}^{\dagger}\geq P_{2}^{\dagger},$  and $P_{1}^{\dagger}R_{1}\geq P_{2}^{\dagger}R_{2}.$
Hence $0.7676=\rho(W_{1})\leq \rho(W_{2})=0.6660 < 1$.
\end{ex}
The following result is an obvious consequence of Theorem 3.5. 
\begin{cor} (Theorem 3.2, \cite {ss}) \label{cor4}
 Let $A^{-1}\geq 0$. Let $A=P_{1}-R_{1}+S_{1}$ be a weak regular double splitting and
$A=P_{2}-R_{2}+S_{2}$ be a   regular double splitting. If $P_{1}^{-1}\geq P_{2}^{-1}$ and any one of the
following conditions,\\
(i) $P_{1}^{-1}R_{1}\geq P_{2}^{-1}R_{2}$\\
(ii) $P_{1}^{-1}S_{1}\geq P_{2}^{-1}S_{2}$ \\ holds, then $\rho(W_{1})\leq \rho(W_{2})< 1 $.
\end{cor}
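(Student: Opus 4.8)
The plan is to obtain Corollary \ref{cor4} as the special case $m=n$ with $A$ nonsingular of Theorem \ref{mr1}, so that the whole argument reduces to checking that, in the invertible square setting, every hypothesis of Theorem \ref{mr1} is automatically satisfied and the two iteration matrices coincide with those of the corollary. The translation dictionary I would set up first is: when $A\in\mathbb{R}^{n\times n}$ is nonsingular, $A^{\dagger}=A^{-1}$, and $P_i^{\dagger}=P_i^{-1}$ for each nonsingular $P_i$; moreover $\mathcal{R}(A)=\mathcal{R}(P_i)=\mathbb{R}^{n}$ and $\mathcal{N}(A)=\mathcal{N}(P_i)=\{0\}$, so any decomposition $A=P_i-R_i+S_i$ with $P_i$ nonsingular is automatically a proper double splitting. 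Consequently a classical weak regular double splitting is exactly a weak regular proper double splitting, a classical regular double splitting is exactly a regular proper double splitting, and under this dictionary $W_1,W_2$ of the corollary are literally the matrices $W_1,W_2$ of Theorem \ref{mr1}.

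Next I would verify the four structural assumptions of Theorem \ref{mr1}. First, $A^{\dagger}=A^{-1}\geq 0$ by hypothesis. Second, $e=(1,\dots,1)^{t}\in\mathbb{R}^{n}=\mathcal{R}(A)$ holds trivially. Third, since $A=P_{2}-R_{2}+S_{2}$ is a regular double splitting, $P_{2}$ is nonsingular, hence $P_{2}P_{2}^{\dagger}=P_{2}P_{2}^{-1}=I\geq 0$, and $P_{2}^{\dagger}=P_{2}^{-1}$, being invertible, has no zero row (a zero row would force the determinant to vanish). Fourth, by the remarks above $A=P_{1}-R_{1}+S_{1}$ is a weak regular proper double splitting, $A=P_{2}-R_{2}+S_{2}$ is a regular proper double splitting, and $P_{1}^{\dagger}\geq P_{2}^{\dagger}$ is precisely $P_{1}^{-1}\geq P_{2}^{-1}$. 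Finally, condition (i) resp. (ii) of the corollary is condition (i) resp. (ii) of Theorem \ref{mr1} verbatim after replacing $P_{i}^{\dagger}$ by $P_{i}^{-1}$. Applying Theorem \ref{mr1} then gives $\rho(W_{1})\leq\rho(W_{2})<1$, which is the assertion.

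Since the corollary is a pure specialization, there is essentially no real obstacle; the only points I would take care not to skip are the two small verifications that make Theorem \ref{mr1} applicable, namely that $P_{2}^{-1}$ has no zero row (because it is invertible) and that the classical (weak) regular double splittings qualify as proper double splittings in the rectangular sense (because $A$ and the $P_{i}$ are nonsingular, so their ranges and null spaces coincide automatically). Everything else is the routine substitution $A^{\dagger}\mapsto A^{-1}$, $P_{i}^{\dagger}\mapsto P_{i}^{-1}$ throughout the statement of Theorem \ref{mr1}.
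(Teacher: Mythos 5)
Your proposal is correct and follows exactly the paper's route: the paper obtains this corollary as an immediate specialization of Theorem \ref{mr1} to the square nonsingular case, which is precisely your reduction. The small verifications you single out (that $e\in\mathcal{R}(A)=\mathbb{R}^{n}$, that $P_{2}P_{2}^{\dagger}=I\geq 0$, that the invertible $P_{2}^{-1}$ has no zero row, and that classical (weak) regular double splittings are automatically proper) are exactly what makes the specialization legitimate, so nothing is missing.
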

The next result proof is similar to the proof of Theorem $3.1$. Thus we skip the proof.
 \begin{thm}\label{mr3}
   Let $A\in \mathbb{R}^{m\times n}$ and $A^{\dagger}\geq 0$. Let $A=P_{1}-R_{1}+S_{1}$ be a  weak  regular proper double splitting and
$A=P_{2}-R_{2}+S_{2}$ be a weak regular proper double splitting. If $P_{1}^{\dagger}A\geq P_{2}^{\dagger}A$ and any of the
following conditions,\\
(i) $P_{1}^{\dagger}R_{1}\geq P_{2}^{\dagger}R_{2}$\\
(ii) $P_{1}^{\dagger}S_{1}\geq P_{2}^{\dagger}S_{2}$ \\ holds, then $\rho(W_{1})\leq \rho(W_{2})< 1 $.
    \end{thm}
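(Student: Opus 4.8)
The plan is to run the argument of Theorem~\ref{dcr1} almost verbatim, the one new ingredient being that the hypothesis $P_1^{\dagger}A\ge P_2^{\dagger}A$ is used precisely to sidestep the step of that proof which established $Ax_1\ge 0$; that step rested on $P_1P_1^{\dagger}\ge 0$, $R_1\ge 0$ and $-S_1\ge 0$, none of which is available for a merely weak regular proper double splitting. First I would dispose of convergence: since both $A=P_i-R_i+S_i$ are weak regular proper double splittings of the semi-monotone matrix $A$, Theorem~\ref{con} gives $\rho(W_1)<1$ and $\rho(W_2)<1$, so only $\rho(W_1)\le\rho(W_2)$ remains. If $\rho(W_1)=0$ this is trivial, so assume $0<\rho(W_1)<1$. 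Observe $W_1\ge 0$ and $W_2\ge 0$, because $P_i^{\dagger}R_i\ge 0$ and $-P_i^{\dagger}S_i\ge 0$ by weak regularity while the two remaining blocks are $I$ and $0$. Apply the Perron--Frobenius theorem (Theorem~\ref{Theorem2}) to $W_1$: there is $x=\begin{pmatrix}x_1\\ x_2\end{pmatrix}\ge 0$, $x\ne 0$, with $W_1x=\rho(W_1)x$, that is,
\[
P_1^{\dagger}R_1x_1-P_1^{\dagger}S_1x_2=\rho(W_1)x_1,\qquad x_1=\rho(W_1)x_2 .
\]

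Using these two relations, exactly as in Theorem~\ref{dcr1} one checks that the second block of $W_2x-\rho(W_1)x$ vanishes and its first block equals
\[
\nabla:=(P_2^{\dagger}R_2-P_1^{\dagger}R_1)x_1+\tfrac{1}{\rho(W_1)}(P_1^{\dagger}S_1-P_2^{\dagger}S_2)x_1 .
\]
The crux is to show $\nabla\ge 0$. In Case~(i), $P_1^{\dagger}R_1\ge P_2^{\dagger}R_2$ makes the first summand nonpositive, hence $\ge\tfrac{1}{\rho(W_1)}(P_2^{\dagger}R_2-P_1^{\dagger}R_1)x_1$ since $0<\rho(W_1)<1$; in Case~(ii), $P_1^{\dagger}S_1\ge P_2^{\dagger}S_2$ makes $(P_1^{\dagger}S_1-P_2^{\dagger}S_2)x_1\ge 0$, so $\tfrac{1}{\rho(W_1)}(P_1^{\dagger}S_1-P_2^{\dagger}S_2)x_1\ge(P_1^{\dagger}S_1-P_2^{\dagger}S_2)x_1$. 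Either way one is left with $\nabla\ge c\,[\,P_2^{\dagger}(R_2-S_2)-P_1^{\dagger}(R_1-S_1)\,]x_1$, where $c=\tfrac{1}{\rho(W_1)}>0$ in Case~(i) and $c=1$ in Case~(ii).

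The bracket is then reduced without any recourse to $Ax_1\ge 0$: from $A=P_i-R_i+S_i$ we get $R_i-S_i=P_i-A$, whence $P_i^{\dagger}(R_i-S_i)=P_i^{\dagger}P_i-P_i^{\dagger}A$, and since both splittings are proper, $P_1^{\dagger}P_1=A^{\dagger}A=P_2^{\dagger}P_2$ (the Berman--Plemmons identity for proper splittings, \cite{bp2}). Hence the bracket collapses to $P_1^{\dagger}A-P_2^{\dagger}A$, so $\nabla\ge c\,(P_1^{\dagger}A-P_2^{\dagger}A)x_1\ge 0$ as $P_1^{\dagger}A\ge P_2^{\dagger}A$ and $x_1\ge 0$. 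Therefore $W_2x-\rho(W_1)x=\begin{pmatrix}\nabla\\ 0\end{pmatrix}\ge 0$, i.e.\ $\rho(W_1)x\le W_2x$ with $x\ge 0$; since $W_2\ge 0$, Lemma~\ref{com} yields $\rho(W_1)\le\rho(W_2)$, and with $\rho(W_2)<1$ the proof is complete.

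The main obstacle is the one just indicated: because the weak regular hypothesis forbids reproducing the ``$Ax_1\ge 0$'' step of Theorem~\ref{dcr1}, one must instead arrange that the vector appearing after bounding $\nabla$ is $(P_1^{\dagger}A-P_2^{\dagger}A)x_1$ rather than $(P_1^{\dagger}-P_2^{\dagger})Ax_1$ --- which is exactly what the range-type hypothesis $P_1^{\dagger}A\ge P_2^{\dagger}A$ delivers. The remaining points are routine: one should verify that $\rho(W_1)\ne 0$ together with $x\ne 0$ forces $x_1\ne 0$ (immediate from $x_1=\rho(W_1)x_2$), and that no hidden use of $R_i\ge 0$ or $-S_i\ge 0$ slips into the computation of $\nabla$; both should go through without difficulty.
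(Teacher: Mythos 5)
Your proposal is correct and is exactly the route the paper intends: the paper omits this proof with the remark that it is ``similar to the proof of Theorem \ref{dcr1}'', and your write-up is precisely that adaptation, correctly observing that the hypothesis $P_{1}^{\dagger}A\geq P_{2}^{\dagger}A$ (applied to $x_{1}\geq 0$ after the bracket collapses to $P_{1}^{\dagger}A-P_{2}^{\dagger}A$ via $P_{1}^{\dagger}P_{1}=A^{\dagger}A=P_{2}^{\dagger}P_{2}$) replaces the step of Theorem \ref{dcr1} that needed $P_{1}P_{1}^{\dagger}\geq 0$, $R_{1}\geq 0$, $-S_{1}\geq 0$ to get $Ax_{1}\geq 0$. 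No gaps.
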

\newpage
 
\end{document}